\documentclass{article}

\usepackage[english]{babel}
\usepackage[a4paper]{geometry}
\usepackage{amssymb, amsmath, amsthm, mathtools}
\usepackage{graphicx, cite, color, enumitem}
\usepackage{longtable,pdflscape,booktabs,caption,multicol}
\usepackage{subcaption}

\usepackage[hyphens]{url}
\usepackage[colorlinks=true,citecolor=black,linkcolor=black,urlcolor=blue]{hyperref}
\usepackage[hyphenbreaks]{breakurl}

\theoremstyle{plain}
\newtheorem{theorem}{Theorem}
\newtheorem{lemma}[theorem]{Lemma}
\newtheorem{corollary}[theorem]{Corollary}
\newtheorem{proposition}[theorem]{Proposition}

\theoremstyle{definition}

\newtheorem{problem}[theorem]{Problem}

\usepackage{tikz,pgf}
\usetikzlibrary{calc}
\usepackage{float}

\DeclareMathOperator{\cart}{\square}
\DeclareMathOperator{\Circ}{Circ}
\DeclareMathOperator{\Aut}{Aut}
\DeclareMathOperator{\Dih}{Dih}
\DeclareMathOperator{\Sym}{Sym}

\newcommand{\arxiv}[2]{\href{https://arxiv.org/abs/#1}{\texttt{arXiv:#1}} \texttt{[#2]}}
\newcommand{\doi}[1]{\url{https://doi.org/#1}}

\usepackage{authblk}

\title{Nut graphs with a prescribed\\number of vertex and edge orbits}

\author[1,2,3]{Nino Bašić}
\author[1,4]{Ivan Damnjanović}

\affil[1]{FAMNIT, University of Primorska, Koper, Slovenia}
\affil[2]{IAM, University of Primorska, Koper, Slovenia}
\affil[3]{Institute of Mathematics, Physics and Mechanics, Ljubljana, Slovenia}
\affil[4]{Faculty of Electronic Engineering, University of Niš, Niš, Serbia}

\date{}

\begin{document}

\maketitle

\begin{abstract}
A nut graph is a nontrivial graph whose adjacency matrix has a one-dimensional null space spanned by a vector without zero entries. Recently, it was shown that a nut graph has more edge orbits than vertex orbits. It was also shown that for any even $r \ge 2$ and any $k \ge r + 1$, there exist infinitely many nut graphs with $r$ vertex orbits and $k$ edge orbits. Here, we extend this result by finding all the pairs $(r, k)$ for which there exists a nut graph with $r$ vertex orbits and $k$ edge orbits. In particular, we show that for any $k \ge 2$, there are infinitely many Cayley nut graphs with $k$ edge orbits and $k$ arc orbits.
\end{abstract}

\bigskip\noindent
{\bf Keywords:} nut graph, vertex orbit, edge orbit, arc orbit, Cayley graph, automorphism.

\bigskip\noindent
{\bf Mathematics Subject Classification:} 05C50, 05C25.

\section{Introduction}

A \emph{nut graph} is a nontrivial graph whose adjacency matrix has a one-dimensional null space spanned by a full vector, i.e., a vector without zero entries. The concept of nut graph was introduced by Sciriha and Gutman \cite{Sciriha1997, Sciriha1998_A, Sciriha1998_B, ScGu1998, Sciriha1999} and then investigated through a series of papers \cite{Sciriha2007, Sciriha2008, FoGaGoPiSc2020, GaPiSc2023}. The chemical justification for studying nut graphs can be found in \cite{ScFo2007, ScFo2008, FoPiToBoSc2014, CoFoGo2018, FoPiBa2021} and for more results on nut graphs, the reader is referred to the monograph \cite{ScFa2021}.

All the orders attainable by a $d$-regular nut graph were determined for each $d \le 4$ in \cite{GaPiSc2023}, and for each $d \in \{5, 6, \ldots, 11\}$ in \cite{FoGaGoPiSc2020}. Afterwards, a circulant graph based construction was used \cite{BaKnSk2022} to prove that there exists a $12$-regular nut graph of order $n$ if and only if $n \ge 16$. All the pairs $(n, d)$ for which there exists a $d$-regular circulant nut graph of order $n$ were subsequently determined through a series of papers \cite{DaSt2022, Damnjanovic2023_FIL, Damnjanovic2024_AMC}. Also, all the pairs $(n, d)$ with $4 \mid d$ for which there exists a $d$-regular Cayley nut graph of order $n$ were determined in \cite{Damnjanovic2023_ARX}. For more results on the realizability problems concerning nut graphs, see \cite{BaDam2024, BaDamFo2024, BaFo2024}. The complete classification of quartic circulant nut graphs was given in \cite{Damnjanovic2024_DMC}, while the complete classification of cubic tricirculant nut graphs and quartic bicirculant nut graphs was given in \cite{DaBaPiZi2024} and \cite{DaBaPiZi2025}, respectively. Moreover, it was shown that cubic tetra- and pentacirculant nut graphs do not exist \cite{BaDam2024}.

For a graph $G$, let $o_v(G)$, $o_e(G)$ and $o_a(G)$ denote the number of vertex orbits, edge orbits and arc orbits in $G$, respectively. The properties of vertex and edge orbits in nut graphs were recently investigated \cite{BaFoPi2024}, yielding the following two results.

\begin{theorem}[\hspace{1sp}{\cite[Theorem~2]{BaFoPi2024}}]\label{cool_th}
    Let $G$ be a nut graph. Then $o_e(G) \ge o_v(G) + 1$. 
\end{theorem}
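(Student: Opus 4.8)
The plan is to exploit the rigidity of the kernel eigenvector of a nut graph under automorphisms, together with the equitability of orbit partitions. Write $A = A(G)$, let $x$ be a full kernel vector, and let $\Gamma = \Aut(G)$. Since $\ker A$ is one-dimensional and each permutation matrix $P_\sigma$ with $\sigma \in \Gamma$ commutes with $A$, we get $P_\sigma x = \pm x$, and the sign defines a homomorphism $\chi \colon \Gamma \to \{+1,-1\}$. I would set $\Gamma^+ = \ker\chi$, so $[\Gamma : \Gamma^+] \le 2$ and $x$ is constant, with nonzero value, on every $\Gamma^+$-orbit. If $\chi$ is nontrivial, a short argument (if some $\sigma \notin \Gamma^+$ stabilized a $\Gamma$-orbit that is a single $\Gamma^+$-orbit, then on that orbit $x$ would be both constant and negated by $\sigma$, hence zero there) shows every vertex orbit splits into exactly two $\Gamma^+$-orbits; hence the number $N$ of $\Gamma^+$-orbits is $o_v(G)$ when $\chi$ is trivial and $2\,o_v(G)$ otherwise, and $N \ge 2$ since $x$ is not constant on $V(G)$.

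Next I would work with the (equitable) $\Gamma^+$-orbit partition $W_1,\dots,W_N$ and its quotient matrix $B^+$, where $B^+_{ij}$ counts the neighbours in $W_j$ of a vertex of $W_i$. Because $x$ is constant on each $W_i$, the vector of its orbit-values is a kernel vector of $B^+$ with no zero entry; in particular every row of $B^+$ has at least two nonzero entries, as otherwise this full-support vector would have a zero entry or $G$ would be disconnected. Now every $\Gamma^+$-edge orbit is contained in a single unordered pair $\{W_i, W_j\}$ or a single $W_i$, so the number of $\Gamma^+$-edge orbits is at least the number of pairs of distinct adjacent orbits plus the number of orbits carrying an internal edge. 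Since $G$, hence the $\Gamma^+$-orbit graph on the $W_i$, is connected and every row of $B^+$ has at least two nonzero entries, this quantity is at least $N$, with equality only if that orbit graph is a single cycle $C_N$. As each $\Gamma$-edge orbit is a union of at most two $\Gamma^+$-edge orbits, this already yields $o_e(G) \ge o_v(G)$ in both cases; the task is to upgrade it to a strict inequality by excluding the cyclic case.

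The crux is thus to show the $\Gamma^+$-orbit graph cannot be a cycle. Suppose it is, with $W_i$ adjacent only to $W_{i-1}$ and $W_{i+1}$, and let $q_{i-1}, p_i \ge 1$ be the numbers of neighbours a vertex of $W_i$ has in $W_{i-1}$ and in $W_{i+1}$. Then $\sum_i c_i \mathbf 1_{W_i} \in \ker A$ exactly when $q_{i-1} c_{i-1} + p_i c_{i+1} = 0$ for all $i$ (indices modulo $N$). This recurrence connects indices two apart with strictly negative ratios $-q_{i-1}/p_i$, so running it once around $C_N$ forces $b_1$, the orbit-value of $x$ on $W_1$, to be a \emph{negative} multiple of itself unless $4 \mid N$; hence $4 \mid N$. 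But then the system splits into an odd-index and an even-index cyclic recurrence of length $N/2$, and in each one the single closure condition is automatically satisfied because the corresponding half of the orbit-values of $x$ is already a nonzero solution of it. Therefore the layer-constant part of $\ker A$ is $2$-dimensional, contradicting $\dim \ker A = 1$, so the cyclic configuration never occurs.

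I expect this final step to be the main obstacle: the first two steps are essentially formal and only give $o_e(G) \ge o_v(G)$, and it is precisely in killing the cyclic configuration that one must use that the nullity of a nut graph is \emph{exactly} one, which is exactly what the parity of the above recurrence exploits. Once that case is excluded, the number of $\Gamma^+$-edge orbits is at least $N+1$; dividing by $[\Gamma : \Gamma^+] \le 2$ and recalling $N \in \{o_v(G),\, 2\,o_v(G)\}$ gives $o_e(G) \ge o_v(G) + 1$ in both cases. (An alternative ending would be to argue directly that a cyclic layered structure must carry two distinct edge orbits between some adjacent pair of orbits, but the nullity argument seems cleaner.)
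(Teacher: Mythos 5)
This theorem is imported by the paper from \cite[Theorem~2]{BaFoPi2024} and is stated without proof, so there is no in-paper argument to compare against; I have therefore checked your proposal on its own merits, and it is correct. The skeleton --- pass to the kernel of the sign character $\chi$, observe that the full kernel vector is constant and nonzero on each $\Gamma^+$-orbit, deduce that every row of the equitable quotient matrix $B^+$ has at least two nonzero entries, and conclude that the orbit quotient is connected with minimum degree two, so that the edge-orbit count is at least $N$ with equality only in the cycle case --- is sound, and the bookkeeping at the end (each $\Gamma$-orbit of vertices splits into exactly two $\Gamma^+$-orbits when $\chi$ is nontrivial, each $\Gamma$-edge-orbit into at most two, so that the extra $+1$ survives the division by $[\Gamma:\Gamma^+]$ after rounding) works out. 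Your exclusion of the cyclic case is also correct: the two-step recurrence $q_{i-1}c_{i-1}+p_ic_{i+1}=0$ forces a negative monodromy, hence a zero entry of the kernel vector, unless $4\mid N$, and when $4\mid N$ the decoupling into two independent cyclic systems, each admitting a nonzero solution inherited from $x$, produces a two-dimensional space of layer-constant kernel vectors, contradicting nullity one. This differs in flavour from the argument in the cited source, which works directly with the full orbit partition and exploits the automorphism-invariant edge-orbit weights $x_ux_v$ (invariant because the two sign changes cancel), thereby avoiding the passage to $\Gamma^+$ and the factor-of-two accounting; your route is somewhat longer but has the virtue of isolating exactly where $\dim\ker A=1$ (rather than merely the fullness of $x$) is used, namely in killing the $4\mid N$ cycle. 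Two small points worth making explicit in a write-up: the connectivity of a nut graph (needed for the quotient to be connected and for $N\ge 2$), and the fact that a loop at some $W_i$ already forces the edge count of the quotient strictly above $N$, so the only equality case really is the loopless cycle.
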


\begin{theorem}[\hspace{1sp}{\cite[Theorem~34]{BaFoPi2024}}]\label{cool_th_2}
Let $r \geq 2$ be even. For every $k \ge r + 1$ there exist infinitely many nut graphs $G$ with $o_v(G) = r$ and $o_e(G) = k$.
\end{theorem}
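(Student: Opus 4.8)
The plan is to produce, for each even $r \ge 2$ and each $k \ge r + 1$, one explicit infinite family of nut graphs with $o_v = r$ and $o_e = k$, obtained by enlarging a vertex-transitive nut graph in a controlled, symmetric way. Two things must be engineered at once: the orbit counts, via a careful choice of automorphism group, and the nut property, via an explicit description of the kernel.

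I would fix as a building block a circulant nut graph $\Circ(n; S)$ of the type produced in the works cited above; it has $o_v = 1$ and a fully supported kernel vector $x$ that is known explicitly and flips sign under a suitable automorphism. From it I would assemble a base graph $B$ in which $r/2$ rigid units are arranged with a dihedral-type symmetry, so that a reflection pairs the units and $\Aut(B)$ has exactly $r$ vertex orbits; the units should be glued along edges whose endpoints lie in the support of $x$, chosen so that a concatenation of copies of $\pm x$ is again zero-sum over every neighbourhood. The target lemma is that $B$ is a nut graph with $o_v(B) = r$ and $o_e(B) = r + 1$, the extremal value allowed by Theorem~\ref{cool_th}. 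Its proof has a spectral part — use the symmetry to block-decompose $A(B)$, show the null space is one-dimensional, write down its generator, and check it has no zero entry — and a combinatorial part — compute $\Aut(B)$ exactly, ruling out accidental extra symmetries, and count its vertex and edge orbits.

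Next I would introduce a local surgery that raises $o_e$ by exactly one while leaving $o_v$ and the nut property intact: for instance, replacing one chosen edge orbit by a short ladder or an analogous symmetric gadget that splits that orbit into two, creates no new vertex orbit, and, by a short direct check, keeps the nullity equal to $1$ with a fully supported generator. Applying this surgery $k - (r+1)$ times to $B$ produces a nut graph with $o_v = r$ and $o_e = k$. To obtain infinitely many such graphs for each pair $(r, k)$, I would keep the order $n$ of the circulant block — or the length of some repeated section of $B$ — as a free parameter ranging over a suitable residue class, arranged so that increasing it strictly increases the number of vertices, hence yields pairwise non-isomorphic graphs, while leaving the two orbit counts and the nut property unchanged.

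The main obstacle is preserving the nut property across the assembly and the surgeries: symmetrising a graph generically enlarges its null space, so every gadget must be designed, and its kernel reverified, so that the nullity stays pinned at $1$ and the kernel vector stays free of zeros — interlacing and quotient-matrix arguments exploiting the built-in symmetry are the natural tools. A secondary, more clerical difficulty is determining $\Aut$ of each graph precisely, since an unanticipated automorphism would collapse orbits and destroy the exact counts.
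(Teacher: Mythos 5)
Theorem~\ref{cool_th_2} is imported from \cite[Theorem~34]{BaFoPi2024}; the present paper gives no proof of it, so the fair comparison is with the machinery the paper does develop for the analogous cases (Theorem~\ref{cayley_th} together with Lemma~\ref{construction_lemma}). Measured against either, your proposal is a plan rather than a proof: both of its load-bearing steps are left unconstructed, and one of them cannot work in the form you describe. First, the base graph $B$ with $o_v(B) = r$ and $o_e(B) = r+1$ is never exhibited. Gluing $r/2$ copies of a circulant nut graph ``along edges whose endpoints lie in the support of $x$'' is exactly the kind of operation that generically changes the nullity: at every glued vertex the kernel condition acquires new neighbours, and imposing an extra dihedral symmetry on the assembly tends to enlarge the null space rather than preserve it. Naming interlacing and quotient matrices as ``the natural tools'' does not substitute for checking that the nullity stays at $1$ and the kernel vector stays full for a \emph{specific} gadget; you acknowledge this is the main obstacle, but that obstacle is the theorem.

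Second, and more decisively, the surgery that ``raises $o_e$ by exactly one while leaving $o_v$ and the nut property intact'' does not exist in the form proposed. Replacing an edge orbit by a short ladder (or any gadget that introduces new vertices) creates vertices of degree $2$, which cannot lie in the same orbit as the original vertices of degree at least $3$ (recall that regular nut graphs require degree $\ge 3$, and more generally the old vertices keep their higher degrees); hence $o_v$ strictly increases. This is precisely why the one subdivision tool known to preserve nut-ness, Lemma~\ref{subdiv_lemma}, when applied orbit-wise as in Lemma~\ref{construction_lemma}, raises $o_v$ and $o_e$ \emph{in lockstep}: subdividing one edge orbit $4t$ times turns a vertex-transitive nut graph with $o_e = o_a = k$ into one with $o_v = 2t+1$ and $o_e = 2t+k$. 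The actual arguments (in \cite{BaFoPi2024} and in Section~\ref{sc_main} here for odd $r$) therefore run in the opposite direction from yours: they first manufacture the entire gap $o_e - o_v = k-1$ inside a single-vertex-orbit Cayley nut graph --- circulants $\Circ(2p, \{1, \ldots, k\})$ and cartesian products such as $\Circ(2p, \{2, \ldots, k-1, p\}) \cart K_2$, with the nut property certified by explicit eigenvalue formulas and a degree bound against $\Phi_p$ and $\Phi_{2p}$ --- and only afterwards subdivide to push $o_v$ up to the target. Your plan starts at the minimal gap permitted by Theorem~\ref{cool_th} and tries to widen it by repeated local surgery; until you produce a concrete gadget that splits an edge orbit without adding a vertex orbit and without perturbing the nullity, the induction on $k$ has no base beyond $k = r+1$ and no step.
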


Theorem \ref{cool_th} compares to the next two results by Buset.

\begin{theorem}[\hspace{1sp}{\cite[Theorem~1]{Buset1985}}]\label{buset_th_1}
    For any $r \in \mathbb{N}$ and $k \in \mathbb{N}_0$, there exists a graph $G$ with $o_v(G) = r$ and $o_e(G) = k$ if and only if $r \le 2k + 1$.
\end{theorem}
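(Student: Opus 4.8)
The plan is to prove the two directions of the equivalence separately, the more substantial one being the necessity of $r \le 2k+1$.

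For necessity, suppose $G$ has exactly $k$ edge orbits $E_1, \dots, E_k$, and for each $i$ let $V_i$ denote the set of endpoints of the edges in $E_i$. Since $E_i$ is invariant under $\Aut(G)$, so is $V_i$, hence $V_i$ is a union of vertex orbits; the crucial point is that it is a union of \emph{at most two} of them. Indeed, fix an edge $e = uv \in E_i$; any $w \in V_i$ is an endpoint of some $f \in E_i$, and choosing $\varphi \in \Aut(G)$ with $\varphi(e) = f$ gives $\{\varphi(u), \varphi(v)\} = \{w, w'\}$ with $w'$ the second endpoint of $f$, so $w$ lies in the orbit of $u$ or of $v$; thus $V_i \subseteq \mathrm{Orb}(u) \cup \mathrm{Orb}(v)$. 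Consequently $V_1 \cup \dots \cup V_k$, which is exactly the set of non-isolated vertices of $G$, is a union of at most $2k$ vertex orbits, and the isolated vertices, if present, form precisely one further orbit because every permutation of them extends to an automorphism of $G$. Therefore $o_v(G) \le 2k + 1$.

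For sufficiency I would realize every pair $(r,k)$ with $r \le 2k+1$ by a disjoint union $G = G_1 \sqcup \dots \sqcup G_t$ of connected, pairwise non-isomorphic graphs; then $\Aut(G) = \Aut(G_1) \times \dots \times \Aut(G_t)$, so $o_v(G) = \sum_i o_v(G_i)$ and $o_e(G) = \sum_i o_e(G_i)$. The building blocks are a cycle $C_n$ (with $(o_v, o_e) = (1,1)$), a star $K_{1,n}$ for $n \ge 2$ (with $(2,1)$), a single vertex (with $(1,0)$), and, to squeeze many edge orbits out of a single vertex orbit, the Cartesian product $C_{m_1} \cart \dots \cart C_{m_j}$ of $j$ cycles of pairwise distinct lengths at least $5$, which is vertex-transitive with exactly $j$ edge orbits (contributing $(1,j)$). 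Given $k \ge 1$ and $1 \le r \le 2k+1$, I choose all the pieces pairwise non-isomorphic and combine them as follows: $k$ stars together with one isolated vertex if $r = 2k+1$; $r-k$ stars together with $2k-r$ cycles if $k \le r \le 2k$; and $r-1$ cycles together with one product of $j = k-r+1$ cycles if $1 \le r \le k$. In each regime the $o_v$-contributions sum to $r$ and the $o_e$-contributions sum to $k$. The remaining case $k = 0$ forces $r = 1$ and is handled by a single vertex.

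The orbit counts of cycles, stars, single vertices, and of disjoint unions of pairwise non-isomorphic connected graphs are all routine. The step I expect to be the main obstacle is verifying that $C_{m_1} \cart \dots \cart C_{m_j}$ has exactly $j$ edge orbits, i.e., that no automorphism interchanges its $j$ coordinate directions of edges; this rests on the structure theory of the Cartesian product: cycles of length at least $5$ are prime with respect to $\cart$, and here pairwise non-isomorphic, so $\Aut(C_{m_1} \cart \dots \cart C_{m_j}) = \Dih(m_1) \times \dots \times \Dih(m_j)$, which acts coordinatewise and hence stabilizes each of the $j$ edge directions while acting transitively within each of them.
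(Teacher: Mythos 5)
This statement is quoted from Buset \cite{Buset1985} and the paper gives no proof of it, so there is nothing internal to compare against; I can only assess your argument on its own terms, and it is correct. The necessity half is the standard (and, as far as I recall, Buset's own) argument: the endpoint set of an edge orbit meets at most two vertex orbits, the non-isolated vertices are covered by the $k$ endpoint sets, and the isolated vertices contribute at most one further orbit, giving $o_v(G) \le 2k+1$. The sufficiency half is a clean additive construction over disjoint unions of pairwise non-isomorphic connected pieces, and the arithmetic in all three regimes checks out. The only step that genuinely needs justification is the $(1,j)$ block, and your appeal to the prime factorization theory is sound: every cycle other than $C_4$ is prime with respect to $\cart$ (a product of two nontrivial connected graphs is $2$-regular only when both factors are $K_2$), so for pairwise distinct lengths $\ge 5$ the automorphism group of $C_{m_1} \cart \cdots \cart C_{m_j}$ acts coordinatewise --- this is exactly \cite[Theorem~15.5]{ImKlaRa2008}, which the present paper itself invokes in Proposition~\ref{cayley_prop_3} --- whence the $j$ coordinate directions are distinct edge orbits and each is a single orbit by edge-transitivity of each factor and vertex-transitivity of the rest. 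An alternative $(1,j)$ block already available in the paper would be $\Circ(n,\{1,\dots,j\})$ with $n \ge 2j+3$ via Lemma~\ref{dih_prel}, but your Cartesian-product construction is equally valid.
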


\begin{theorem}[\hspace{1sp}{\cite[Theorem~2]{Buset1985}}]\label{buset_th_2}
    For any $r \in \mathbb{N}$ and $k \in \mathbb{N}_0$, there exists a connected graph $G$ with $o_v(G) = r$ and $o_e(G) = k$ if and only if $r \le k + 1$.
\end{theorem}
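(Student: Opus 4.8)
I would prove the two implications separately; the necessity half is the short one. For the ``only if'' direction, suppose $G$ is connected and let $\mathcal{V}$ and $\mathcal{E}$ be its sets of vertex orbits and edge orbits. The plan is to form an \emph{orbit quotient} multigraph $\mathcal{Q}$ with vertex set $\mathcal{V}$: for each $E\in\mathcal{E}$ pick a representative edge $\{u,v\}\in E$ and join the orbit of $u$ to the orbit of $v$ in $\mathcal{Q}$ (a loop if these coincide). Since every automorphism of $G$ maps each vertex orbit onto itself, all edges lying in one edge orbit join the same unordered pair of vertex orbits, so $\mathcal{Q}$ does not depend on the chosen representatives. Next, $\mathcal{Q}$ is connected, because every path of $G$ projects onto a walk of $\mathcal{Q}$. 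A connected multigraph on $|\mathcal{V}| = r$ vertices has at least $r-1$ edges (loops and parallel edges only inflate the count), and $\mathcal{Q}$ has exactly $|\mathcal{E}| = k$ edges, so $k \ge r-1$, that is, $r \le k+1$.

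For the ``if'' direction, given $r \ge 1$ and $k \ge r-1$ the plan is to construct a connected $G$ with $o_v(G) = r$ and $o_e(G) = k$, splitting into two families according to whether the intended orbit quotient is a tree. If $k = r-1$, I would take $G = P_{2r-1}$, the path on $2r-1$ vertices (so $G = K_1$ when $r=1$); its automorphism group is the two-element group generated by the mirror reflection, which yields exactly $r$ vertex orbits (the $r-1$ swapped pairs together with the fixed central vertex) and exactly $r-1$ edge orbits. If $k \ge r$, I would take $G$ to be a cycle $C_m$ carrying one pendant path of length $r-1$ at each of its vertices (no pendant paths at all when $r=1$), with a dihedrally invariant family of chords of $k-r$ distinct lengths adjoined --- say lengths $2^1, 2^2, \ldots, 2^{k-r}$ for a suitably large $m$ (an empty family when $k=r$). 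Provided $\Aut(G)$ turns out to be exactly the dihedral group $D_m$ acting on the cycle, this graph has $1 + (r-1) = r$ vertex orbits (the cycle, together with the $r-1$ pendant-path depths) and $(k-r+1) + (r-1) = k$ edge orbits (the cycle edges, the $k-r$ chord classes, and the $r-1$ pendant-edge depths).

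The crux, and the step I expect to be most delicate, is confirming that $\Aut(G)$ is exactly as claimed, since a single unforeseen automorphism could fuse two orbits and wreck the count. For the path this is immediate. For the cycle-based graphs I would argue in two stages. First, for $C_m$ equipped with the chords but without pendant paths, I would choose $m$ to be a large prime and the chord lengths $2^1, \ldots, 2^{k-r}$ so that no nontrivial multiplier of $\mathbb{Z}_m$ permutes them --- a condition excluding only finitely many primes --- and then invoke the standard description of automorphism groups of circulant graphs of prime order to conclude that the automorphism group is $D_m$. Second, attaching one pendant path of length $r-1$ at every cycle vertex singles out the cycle vertices as precisely the vertices of degree at least $3$, so every automorphism of $G$ restricts to an automorphism of the cycle part, i.e.\ to an element of $D_m$, and conversely each element of $D_m$ extends uniquely because each pendant path is a rigid appendage attached at a single cycle vertex. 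Hence $\Aut(G) \cong D_m$ and the orbit counts are as stated. Since $m$ ranges over infinitely many values, every admissible pair $(r,k)$ is in fact realised by infinitely many connected graphs, although the theorem as phrased only asks for one.
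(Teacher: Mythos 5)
The paper does not prove this statement: it is quoted verbatim from Buset \cite{Buset1985} as background (Theorem~\ref{buset_th_2}), so there is no in-paper proof to compare against. Judged on its own, your argument is correct. The necessity half via the orbit-quotient multigraph is the standard one and is sound: each edge orbit determines a well-defined unordered pair of vertex orbits, connectivity of $G$ forces connectivity of the quotient, and a connected multigraph on $r$ vertices has at least $r-1$ edges. The sufficiency constructions also check out: $P_{2r-1}$ handles $k=r-1$, and for $k\ge r$ the cycle-with-chords-and-pendant-paths graph gives the right counts $1+(r-1)=r$ and $(k-r+1)+(r-1)=k$, provided $\Aut(G)\cong\Dih(m)$; your two-stage verification (degree considerations pin the cycle vertices, rigidity of the pendant paths forces the extension) is fine. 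The only place where you lean on nontrivial outside machinery is the claim that $\Circ(m,\{1,2,4,\ldots,2^{k-r}\})$ has dihedral automorphism group for all but finitely many primes $m$, which requires the Burnside-type classification of automorphism groups of prime-order circulants. You could avoid this entirely by taking consecutive chord lengths, i.e.\ the connection set $\{1,2,\ldots,k-r+1\}$, and invoking exactly the statement the paper records as Lemma~\ref{dih_prel}: $\Aut(\Circ(n,\{1,\ldots,\ell\}))\cong\Dih(n)$ for all $n\ge 2\ell+3$. That substitution would make the construction elementary and would also align it with the toolkit the present paper actually uses in Propositions~\ref{cayley_prop_1}--\ref{cayley_prop_3}.
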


With Theorem \ref{cool_th_2} in mind, this leads us to the natural problem of finding all the pairs $(o_v(G), o_e(G))$ attainable by a nut graph $G$. The main result of our paper is the complete resolution of this Buset-type problem. We first investigate the automorphisms of Cayley nut graphs and obtain the next result.

\begin{theorem}\label{cayley_th}
    For any $k \ge 2$, there exist infinitely many Cayley nut graphs $G$ with $o_e(G) = o_a(G) = k$.
\end{theorem}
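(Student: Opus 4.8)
Since every Cayley graph is vertex-transitive, every Cayley graph $G=\mathrm{Cay}(\Gamma,S)$ has $o_v(G)=1$, so Theorem~\ref{cool_th} gives $o_e(G)\ge 2$ and $k\ge 2$ is forced. My first step would be a lemma explaining why $o_a(G)=o_e(G)$ is automatic in the setting I intend to use: if $\Gamma$ is abelian then every connected Cayley graph $G=\mathrm{Cay}(\Gamma,S)$ satisfies $o_a(G)=o_e(G)$. Indeed, for each $s\in S$ the affine map $\iota_s\colon x\mapsto sx^{-1}$ is an automorphism of $G$ (it is the inversion automorphism of $\Gamma$, which stabilises $S$ because $S=S^{-1}$, followed by left translation by $s$), and $\iota_s$ swaps the two endpoints of the edge $\{e_\Gamma,s\}$. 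Conjugating by translations shows that \emph{every} edge of $G$ is flipped by some automorphism, hence each edge orbit is a single arc orbit. So it suffices, for each fixed $k\ge 2$, to construct infinitely many connected abelian Cayley nut graphs $G$ with $o_e(G)=k$.

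For such a $G$ the left regular representation $\Gamma_L\le\Aut(G)$ already splits the edge set into exactly as many orbits as $S$ has inverse-closed classes $\{s,s^{-1}\}$, and enlarging the automorphism group can only merge orbits; thus choosing $S$ with exactly $k$ such classes yields $o_e(G)\le k$ with no further work. The reverse inequality $o_e(G)\ge k$ is the crux, and I would obtain it by designing $S$ so that the $k$ classes are pairwise separated by a local isomorphism invariant of an edge — concretely the number of triangles through it, i.e.\ the number of common neighbours of its endpoints, falling back if necessary to the numbers of short closed walks through it. Since every automorphism preserves such a quantity, arranging that it takes $k$ distinct values over the $k$ classes forces $o_e(G)=k$, and then $o_a(G)=k$ by the lemma. (Alternatively, one could instead invoke the structure theory of automorphism groups of circulants of order $2p$ to show directly that the automorphism group is the expected dihedral one; I would use whichever is cleaner.)

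Concretely, for \emph{even} $k$ I would take $G=\Circ(2p,\{1,2,\dots,k\})$ with $p$ a prime exceeding $2k$, so $S$ has exactly $k$ classes. Writing the eigenvalues through the Dirichlet kernel, $2\sum_{s=1}^{k}\cos(s\theta)=\frac{2\sin(k\theta/2)\cos((k+1)\theta/2)}{\sin(\theta/2)}$ with $\theta=\pi j/p$, primality of $p$ shows the only zero eigenvalue occurs at the Fourier mode $j=p=n/2$, whose eigenvector is the full alternating vector $(1,-1,1,\dots)$; hence $G$ is a nut graph, and letting $p$ range over large primes gives infinitely many. A short count shows that, for $p$ large, an edge whose endpoints differ by $a\in\{1,\dots,k\}$ lies in exactly $2k-a-1$ triangles — distinct for each $a$ — so $o_e(G)=o_a(G)=k$. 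For \emph{odd} $k$ no circulant can work, since a circulant has a one-dimensional null space spanned by a full real vector only via the mode $n/2$, and that requires equally many odd and even symbols, hence an even number of classes. So for odd $k$ I would instead take Cayley graphs on the non-cyclic abelian group $\mathbb{Z}_2\times\mathbb{Z}_n$ with $n\equiv 2\pmod 4$: taking $S$ to consist of the two involutions $(1,0)$ and $(0,n/2)$ together with $\tfrac{k-1}{2}$ pairs $\{(0,a),(0,-a)\}$ and $\tfrac{k-3}{2}$ pairs $\{(1,b),(1,-b)\}$ with all the $a$'s and $b$'s even and lying in disjoint ranges, a character computation shows that exactly one real character, namely $(\epsilon,a)\mapsto(-1)^{\epsilon+a}$, lies in the kernel; this character is everywhere $\pm1$, so $G$ is a nut graph with $k$ classes, and one again checks that a local cycle-count invariant separates the classes.

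The step I expect to be the main obstacle is reconciling the two competing demands on $S$: it must be rigid enough that no hidden symmetry merges two of the $k$ classes (handled via the local invariant, or via automorphism-group structure theory), yet the graph must stay a nut graph for infinitely many orders, which pins the order to a carefully chosen infinite family (primes $2p$, or a congruence class) so that no \emph{accidental} extra zero eigenvalue appears. Verifying that absence of extra zeros — the Dirichlet-kernel estimate in the circulant case and the character-sum estimate on $\mathbb{Z}_2\times\mathbb{Z}_n$ — is where genuine care is needed, though in each case it reduces to elementary number theory once the family has been set up correctly.
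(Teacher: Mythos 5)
Your even-$k$ case is essentially the paper's own construction: Proposition~\ref{cayley_prop_1} uses exactly the graphs $\Circ(2p,\{1,2,\ldots,k\})$ for primes $p$, and your two supporting arguments are sound --- the triangle count $2k-a-1$ through an edge of difference $a$ does separate the $k$ symbol classes (the paper instead invokes Lemma~\ref{dih_prel} to get $\Aut\cong\Dih(2p)$), and your Dirichlet-kernel computation correctly shows that for $p$ prime the only vanishing Fourier mode is $j=p$, whose eigenvector is the full alternating vector (the paper cites Lemma~\ref{circ_prel} instead). Your observation that the inversion map $x\mapsto s-x$ flips every edge of an abelian Cayley graph, giving $o_a=o_e$ for free, is also correct and is implicitly what the paper uses.

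The odd-$k$ case, however, has a genuine gap, and it sits exactly where you predicted. You specify only the \emph{shape} of the connection set on $\mathbb{Z}_2\times\mathbb{Z}_n$ (two involutions plus $\tfrac{k-1}{2}$ and $\tfrac{k-3}{2}$ inverse pairs with even entries ``in disjoint ranges''), but neither of the two essential verifications is carried out, and neither is routine. First, checking that the alternating character is in the kernel is the easy half; the hard half is showing that \emph{no conjugate pair of non-real characters} also vanishes, which would raise the nullity above one. The paper handles this by a specific choice --- $\Circ(2p,\{2,\ldots,k-1,p\})\cart K_2$ with $p\ge 2k+1$ prime --- engineered so that a putative extra zero forces $\Phi_p$ or $\Phi_{2p}$ (degree $p-1$) to divide an explicit integer polynomial of degree only $2k-2$, a contradiction; your family has no analogous mechanism identified, and ``reduces to elementary number theory'' is not a proof. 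Second, the claim that ``a local cycle-count invariant separates the classes'' is asserted but no invariant is exhibited; the paper instead proves $|\Aut G|=8p$ by a neighborhood/stabilizer argument. Finally, your scheme degenerates at $k=3$ (the $(1,b)$ pairs disappear and the graph is quartic), which is precisely the case the paper must treat by an entirely separate construction, $\Circ(2n,\{1,n\})\cart K_4$, in Proposition~\ref{cayley_prop_3}. So the even case stands, but the odd case is a plausible plan rather than a proof.
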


We subsequently apply subdivisions to the Cayley nut graphs from Theorem \ref{cayley_th} to get the following theorem.

\begin{theorem}\label{main_th}
    For any $r \in \mathbb{N}$ and $k \ge r + 1$, there exist infinitely many nut graphs $G$ with $o_v(G) = r$ and $o_e(G) = k$.
\end{theorem}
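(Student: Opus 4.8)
The plan is to dispose of two easy cases immediately and to settle the genuinely new case, $r$ odd with $r\ge 3$, by an odd-length subdivision applied to the Cayley nut graphs produced by Theorem~\ref{cayley_th}. For $r=1$, a Cayley nut graph $G$ with $o_e(G)=o_a(G)=k$ exists by Theorem~\ref{cayley_th} (as $k\ge 2$); being Cayley it is vertex-transitive, so $o_v(G)=1$ and $o_e(G)=k=r+1$, and there are infinitely many such $G$. For even $r\ge 2$ and $k\ge r+1$, Theorem~\ref{cool_th_2} already gives the statement. So fix odd $r\ge 3$ and $k\ge r+1$, set $k_0:=k-r+1\ge 2$, and take by Theorem~\ref{cayley_th} a Cayley nut graph $G$ with $o_e(G)=o_a(G)=k_0$. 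Being Cayley, $G$ is $d$-regular, and $d\ge 3$ since a nut graph has minimum degree at least $2$ and a connected $2$-regular graph is a cycle, which is never a nut graph. Pick any edge orbit $E_1$ of $G$ and let $H$ arise from $G$ by replacing each edge of $E_1$ with an internally disjoint path of length $\ell:=2r-1$; note $\ell$ is odd and, as $r$ is odd, $\ell\equiv 1\pmod 4$.

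\textbf{Step 1: $H$ is a nut graph.} If $A(H)x=0$, then along a path $u=w_0,w_1,\dots,w_\ell=v$ replacing an edge $uv\in E_1$ the equations at the internal vertices force $x_{w_{2i}}=(-1)^i x_u$ and $x_{w_{\ell-2i}}=(-1)^i x_v$; thus $x$ is determined on all internal vertices by its restriction to $V(G)$, and each internal value is $\pm x_u$ or $\pm x_v$. The equation at a branch vertex $u$ then reads $\sum_{uv\in E(G)}\varepsilon_{uv}\,x_v=0$, where $\varepsilon_{uv}=1$ if $uv$ is untouched and $\varepsilon_{uv}=(-1)^{(\ell-1)/2}$ if $uv\in E_1$; since $\ell\equiv 1\pmod 4$ we have $(-1)^{(\ell-1)/2}=1$, so this is exactly $A(G)\,x|_{V(G)}=0$. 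Hence $x\mapsto x|_{V(G)}$ is a linear bijection $\ker A(H)\to\ker A(G)$, so $\dim\ker A(H)=1$; and as the kernel vector of $G$ has no zero entry, neither does its extension to $H$. Therefore $H$ is a nut graph.

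\textbf{Step 2: the orbit counts.} Since $d\ge 3$, the branch vertices are precisely the vertices of $H$ of degree $\ne 2$, so every $\psi\in\Aut(H)$ maps $V(G)$ onto itself, recognizes $E_1$ as the set of branch-vertex pairs joined by an $\ell$-path in $H$, hence restricts to an automorphism of $G$, and is determined by that restriction; conversely each automorphism of $G$ (which fixes the orbit $E_1$ setwise) extends to $H$. Thus $\Aut(H)\cong\Aut(G)$ and $V(G)$ is a single vertex orbit of $H$. Because $o_a(G)=o_e(G)=k_0$, each edge orbit of $G$ is a single arc orbit, so for every $uv\in E_1$ some automorphism of $G$ swaps $u$ and $v$; its lift reverses the corresponding $\ell$-path, identifying internal level $j$ with level $\ell-j$, while distinct levels stay in distinct orbits by their distance to $V(G)$. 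Hence the internal vertices of the $E_1$-paths form $(\ell-1)/2=r-1$ orbits, giving $o_v(H)=1+(r-1)=r$; likewise the $\ell$ edges of an $E_1$-path split into $(\ell+1)/2=r$ orbits (the central edge being fixed by the reversal), while the remaining $k_0-1$ edge orbits of $G$ survive unchanged, giving $o_e(H)=(k_0-1)+r=k$. As Theorem~\ref{cayley_th} provides infinitely many non-isomorphic $G$, we obtain infinitely many such $H$.

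\textbf{Where the difficulty lies.} Step~1 is routine once one spots the crucial point that $\ell$ must be $\equiv 1\pmod 4$ (not merely odd), so that the branch-vertex equations pick up no sign. The real work is in Step~2: establishing $\Aut(H)\cong\Aut(G)$ and then verifying that each listed family of vertices and of edges is genuinely one orbit and that none of them merge. This is exactly where the hypothesis $o_a(G)=o_e(G)$ enters — through the edge-reversing automorphisms — and it is the reason the internal vertices contribute $(\ell-1)/2$ orbits rather than $\ell-1$.
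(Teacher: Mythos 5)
Your proposal is correct and takes essentially the same route as the paper: the same three-way case split ($r$ even via Theorem~\ref{cool_th_2}, $r=1$ via Theorem~\ref{cayley_th}, and odd $r\ge 3$ by subdividing a single edge orbit of a Cayley nut graph with $k-r+1$ edge and arc orbits), with your path length $\ell=2r-1\equiv 1\pmod 4$ coinciding exactly with the paper's $4t$ subdivisions for $t=(r-1)/2$ in Lemma~\ref{construction_lemma}. The only differences are cosmetic --- you verify the kernel transfer directly where the paper invokes Lemma~\ref{subdiv_lemma}, and you spell out the orbit counts that the paper packages into Lemma~\ref{construction_lemma}; also, in the $r=1$ case the clause ``$o_e(G)=k=r+1$'' should simply read $o_e(G)=k$.
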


In Section \ref{sc_prel}, we introduce the notation to be used in the next sections and preview some known results that we will need. Afterwards, we prove Theorem \ref{cayley_th} in Section \ref{sc_cayley} and Theorem \ref{main_th} in Section \ref{sc_main}. In Section \ref{sc_conclusion}, we end the paper with a brief conclusion.

\section{Preliminaries}\label{sc_prel}

All graphs considered will be undirected, simple and finite. We denote the standard $(0, 1)$-adjacency matrix of a graph $G$ by $A(G)$, and its spectrum, regarded as a multiset, by $\sigma(G)$. All spectral properties considered will correspond to the adjacency matrix. By $\Circ(n, S)$, where $S \subseteq \{1, 2, \ldots, \lfloor \frac{n}{2} \rfloor \}$, we denote the circulant graph on the vertex set $\mathbb{Z}_n$ such that any two vertices $u$ and $v$ are adjacent if and only if $u - v  \in S$ or $v - u  \in S$ (with the subtraction being done in $\mathbb{Z}_n$). We will need the following result on the spectra of circulant matrices.

\begin{lemma}[\hspace{1sp}{\cite[Section~3.1]{Gray2006}}]\label{circ_lemma}
    For any $n \in \mathbb{N}$, the eigenvalues of the circulant matrix
    \[
    C = \begin{bmatrix}
        c_0 & c_1 & c_2 & \cdots & c_{n-1}\\
        c_{n-1} & c_0 & c_1 & \cdots & c_{n-2}\\
        c_{n-2} & c_{n-1} & c_0 & \cdots & c_{n-3}\\
        \vdots & \vdots & \vdots & \ddots & \vdots\\
        c_1 & c_2 & c_3 & \dots & c_0
    \end{bmatrix}
    \]
    are of the form $P(\zeta)$, as $\zeta \in \mathbb{C}$ ranges over the $n$-th roots of unity, where
    \[
        P(x) = c_0 + c_1 x + c_2 x^2 + \cdots + c_{n-1} x^{n-1} .
    \]
    Moreover, for any $n$-th root of unity $\zeta$, the vector
    \[
        \begin{bmatrix} 1 & \zeta & \zeta^2 & \cdots & \zeta^{n-1} \end{bmatrix}^\intercal
    \]
    is an eigenvector of $C$ for the eigenvalue $P(\zeta)$.
\end{lemma}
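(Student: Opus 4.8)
The statement to prove is Lemma~\ref{circ_lemma} itself, namely the classical description of the eigenvalues and eigenvectors of a circulant matrix. The plan is to verify directly that each vector of the prescribed form is an eigenvector, and then to argue that this accounts for all $n$ eigenvalues.

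First I would fix an $n$-th root of unity $\zeta$ and set $v = \begin{bmatrix} 1 & \zeta & \zeta^2 & \cdots & \zeta^{n-1} \end{bmatrix}^\intercal$. The key computation is to evaluate the $j$-th entry of $Cv$ for $j \in \{0, 1, \ldots, n-1\}$, using the fact that the entry of $C$ in row $j$ and column $\ell$ is $c_{(\ell - j) \bmod n}$. Writing $(Cv)_j = \sum_{\ell = 0}^{n-1} c_{(\ell - j) \bmod n}\, \zeta^{\ell}$ and substituting $m = (\ell - j) \bmod n$, I would use $\zeta^n = 1$ to see that $\zeta^{\ell} = \zeta^{j} \zeta^{m}$ regardless of whether the reduction modulo $n$ wraps around. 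This turns the sum into $(Cv)_j = \zeta^{j} \sum_{m=0}^{n-1} c_m \zeta^{m} = \zeta^{j} P(\zeta)$. Since $\zeta^j$ is exactly the $j$-th entry of $v$, this shows $Cv = P(\zeta) v$, establishing that $v$ is an eigenvector with eigenvalue $P(\zeta)$.

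Having shown that each of the $n$ roots of unity yields an eigenpair, I would then argue that these exhaust the spectrum. The cleanest way is to observe that the $n$ vectors $v_\zeta$, as $\zeta$ ranges over the distinct $n$-th roots of unity $1, \omega, \omega^2, \ldots, \omega^{n-1}$ with $\omega = e^{2\pi i / n}$, form the columns of a Vandermonde matrix whose nodes $1, \omega, \ldots, \omega^{n-1}$ are pairwise distinct. Hence these $n$ eigenvectors are linearly independent, so $C$ is diagonalizable and its $n$ eigenvalues (with multiplicity) are precisely $P(1), P(\omega), \ldots, P(\omega^{n-1})$, i.e.\ the values $P(\zeta)$ as $\zeta$ ranges over the $n$-th roots of unity. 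This matches the claim.

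The only real subtlety, and the step I would be most careful about, is the index bookkeeping in the main computation: one must confirm that the congruence $\zeta^{\ell} = \zeta^{j}\zeta^{(\ell - j) \bmod n}$ holds even when $\ell - j$ is negative and the modular reduction adds $n$, which is where $\zeta^n = 1$ is essential, and that as $\ell$ runs over $\{0, \ldots, n-1\}$ the reduced index $m = (\ell - j)\bmod n$ also runs over a complete residue system, so the inner sum is genuinely $P(\zeta)$ independent of $j$. Everything else is a routine consequence of the Vandermonde structure. Since the lemma is cited from \cite{Gray2006} and used only as a black box in the sequel, a short self-contained verification along these lines suffices.
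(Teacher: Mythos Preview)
Your proof is correct and complete: the index computation showing $(Cv)_j = \zeta^j P(\zeta)$ is carried out cleanly, and the Vandermonde argument correctly establishes that the $n$ eigenvectors are linearly independent, so the listed eigenvalues exhaust $\sigma(C)$ with multiplicity. Note, however, that the paper does not prove this lemma at all---it is quoted from \cite{Gray2006} as a standard preliminary result and used as a black box---so there is no approach in the paper to compare against; your self-contained verification simply fills in what the paper takes for granted.
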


We also need the next two results on nut graphs.

\begin{lemma}[\hspace{1sp}{\cite[Lemma~4.1]{ScGu1998}}]\label{subdiv_lemma}
    Let $G$ be a nut graph and let $G_1$ be the graph that arises from $G$ by subdividing an edge of $G$ four times. Then $G_1$ is also a nut graph.
\end{lemma}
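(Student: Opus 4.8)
The plan is to describe the null space of $A(G_1)$ explicitly in terms of the null space of $A(G)$. Write the subdivided edge as $e = uv$, and let $u - w_1 - w_2 - w_3 - w_4 - v$ be the path that replaces it in $G_1$; thus $G_1$ is obtained from $G$ by deleting the edge $uv$, adding the four vertices $w_1, w_2, w_3, w_4$, and adding the edges $uw_1$, $w_1w_2$, $w_2w_3$, $w_3w_4$, $w_4v$. Let $x$ be a kernel vector of $A(G)$; by hypothesis it has no zero entry.

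First I would produce a kernel vector of $A(G_1)$ without zero entries. Set $y_p = x_p$ for each vertex $p$ of $G$, and $y_{w_1} = x_v$, $y_{w_2} = -x_u$, $y_{w_3} = -x_v$, $y_{w_4} = x_u$. Verifying $A(G_1)\,y = 0$ is routine: at a vertex of $G$ other than $u$ or $v$ the equation is the one already satisfied in $G$; at $u$ it reads $\sum_{p \in N_G(u) \setminus \{v\}} x_p + y_{w_1} = 0$, which is the original equation at $u$ because $y_{w_1} = x_v$, and symmetrically at $v$; and at $w_1, w_2, w_3, w_4$ the equations $y_u + y_{w_2} = 0$, $y_{w_1} + y_{w_3} = 0$, $y_{w_2} + y_{w_4} = 0$, $y_{w_3} + y_v = 0$ hold by construction. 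Since $x$ has no zero entry, neither does $y$.

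Second, I would show $\dim \ker A(G_1) \le 1$, which then pins the null space down to $\langle y \rangle$. For $z \in \ker A(G_1)$, the equations at $w_1, w_2, w_3, w_4$ force $z_{w_2} = -z_u$, $z_{w_3} = -z_v$, $z_{w_4} = -z_{w_2} = z_u$, and $z_{w_1} = -z_{w_3} = z_v$. Plugging $z_{w_1} = z_v$ into the equation at $u$ and $z_{w_4} = z_u$ into the equation at $v$ converts both into the corresponding kernel equations of $G$, and the equations at the remaining vertices of $G$ are already those of $G$; hence the restriction of $z$ to $V(G)$ lies in $\ker A(G)$, so it equals $c\,x$ for some scalar $c$, and then the relations above give $z = c\,y$. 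Therefore $G_1$ has a one-dimensional null space spanned by $y$; as $G_1$ is clearly nontrivial, it is a nut graph.

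This is essentially a bookkeeping computation, and I do not expect a genuine obstacle beyond checking the relations carefully. The one point that matters is that the edge is subdivided exactly four times: it is precisely after four steps that the value carried along the path back to $v$ equals $z_u$, which makes the modified boundary equation at $v$ collapse to the original one, whereas with one, two, or three subdivisions the same bookkeeping instead imposes a linear constraint on $x_u$ and $x_v$ incompatible with $x$ having no zero entry.
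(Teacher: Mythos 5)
Your proof is correct. The paper does not prove this lemma itself but imports it from Sciriha--Gutman \cite[Lemma~4.1]{ScGu1998}, and your direct verification --- exhibiting the full kernel vector $y$ with $y_{w_1} = x_v$, $y_{w_2} = -x_u$, $y_{w_3} = -x_v$, $y_{w_4} = x_u$, and then showing every $z \in \ker A(G_1)$ restricts to an element of $\ker A(G)$ so that $\ker A(G_1) = \langle y \rangle$ --- is essentially the standard argument behind that cited result, with no gaps.
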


\begin{lemma}[\hspace{1sp}{\cite[Theorem~2.4]{BaKnSk2022}}]\label{circ_prel}
    For any $k, n \in \mathbb{N}$ such that $k$ and $n$ are both even and $n \ge 2k + 2$, the graph $\Circ(n, \{1, 2, \ldots, k \})$ is a nut graph if and only if $\gcd(\frac{n}{2}, \frac{k}{2}) = \gcd(\frac{n}{2}, k + 1) = 1$.
\end{lemma}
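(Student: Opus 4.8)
The plan is to diagonalize the adjacency matrix explicitly via Lemma~\ref{circ_lemma} and then reduce the nut condition to counting, with multiplicity, the zeros of a single polynomial over the $n$-th roots of unity. First I would record that the hypothesis $n \ge 2k+2$ forces $k < n/2$, so the index blocks $\{1, \ldots, k\}$ and $\{n-k, \ldots, n-1\}$ are disjoint and avoid $0$; hence $A(\Circ(n, \{1, \ldots, k\}))$ is the circulant matrix with $c_j = 1$ exactly for $j \in \{1, \ldots, k\} \cup \{n-k, \ldots, n-1\}$ and $c_0 = 0$, and its associated polynomial restricted to roots of unity is
\[
P(\zeta) = \sum_{j=1}^{k}\bigl(\zeta^j + \zeta^{-j}\bigr),
\]
which is real. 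By Lemma~\ref{circ_lemma} the spectrum is $\{P(\zeta) : \zeta^n = 1\}$ with eigenvectors $(1, \zeta, \ldots, \zeta^{n-1})^\intercal$.

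The crux is to determine which roots of unity annihilate $P$. Since $k$ is even, a direct check gives $P(-1) = \sum_{j=1}^{k} 2(-1)^j = 0$, so $0 \in \sigma(G)$ always, and (as $n$ is even) its eigenvector $(1, -1, 1, \ldots, -1)^\intercal$ has no zero entry. Therefore $G$ is a nut graph if and only if $0$ is a \emph{simple} eigenvalue, i.e.\ $-1$ is the only $n$-th root of unity with $P = 0$. To count the zeros I would clear denominators: for $\zeta \ne 1$, using $\sum_{j=-k}^{k} \zeta^j = (\zeta^{2k+1} - 1)/(\zeta^k(\zeta - 1))$ together with $P(\zeta) = \sum_{j=-k}^{k}\zeta^j - 1$, the equation $P(\zeta) = 0$ rearranges to the clean factorization
\[
(\zeta^k - 1)(\zeta^{k+1} + 1) = 0 .
\]
Thus the zeros of $P$ among the $n$-th roots of unity are exactly $(A \setminus \{1\}) \cup B$, where $A = \{\zeta \in \mu_n : \zeta^k = 1\}$ and $B = \{\zeta \in \mu_n : \zeta^{k+1} = -1\}$; the value $\zeta = 1$ must be discarded, as it was introduced by clearing the denominator and in fact $P(1) = 2k \ne 0$.

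It then remains to count these sets. Standard facts about cyclic groups give $|A| = \gcd(n, k)$, while $B$ is a coset of $\{\zeta \in \mu_n : \zeta^{k+1} = 1\}$ (nonempty because $-1 \in B$, since $k+1$ is odd and $n$ is even), so $|B| = \gcd(n, k+1) = \gcd(n/2, k+1)$, the last equality using that $k+1$ is odd. A one-line computation shows $A \cap B = \{-1\}$: any common element satisfies $\zeta = \zeta^{k+1}/\zeta^k = -1$. Inclusion--exclusion then yields the nullity
\[
\dim \ker A(G) = \bigl(\gcd(n,k) - 1\bigr) + \gcd(n/2, k+1) - 1 .
\]
Finally, $G$ is a nut graph iff this equals $1$, i.e.\ $\gcd(n,k) + \gcd(n/2, k+1) = 3$; since $\gcd(n,k) = 2\gcd(n/2, k/2) \ge 2$ and $\gcd(n/2, k+1) \ge 1$, this forces $\gcd(n/2, k/2) = 1$ and $\gcd(n/2, k+1) = 1$, which is precisely the claimed characterization.

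I expect the main obstacle to be the careful bookkeeping in the root count rather than any single hard idea: the factorization is short algebra, but the nullity formula is sensitive to two easily-mishandled points—excluding the spurious root $\zeta = 1$ from $A$, and correctly identifying the single overlap $A \cap B = \{-1\}$. An off-by-one in either place would shift the nullity and corrupt the resulting gcd conditions, so the delicate part is getting the inclusion--exclusion exactly right and confirming (via the parity of $k$ and of $n$) that $-1$ is simultaneously the guaranteed zero, the unique overlap, and the carrier of a full eigenvector.
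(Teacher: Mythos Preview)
Your argument is correct. The factorization $P(\zeta)=0 \iff (\zeta^k-1)(\zeta^{k+1}+1)=0$ for $\zeta\neq 1$ checks out, the counts $|A|=\gcd(n,k)=2\gcd(n/2,k/2)$ and $|B|=\gcd(n,k+1)=\gcd(n/2,k+1)$ are right (the latter because $k+1$ is odd), and the overlap $A\cap B=\{-1\}$ together with the exclusion of the spurious $\zeta=1$ gives the nullity $\gcd(n,k)+\gcd(n/2,k+1)-2$ exactly as you claim. Since the alternating vector is full, nullity $1$ is equivalent to the nut property here, and the gcd conditions follow.

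As for comparison with the paper: there is nothing to compare. The paper does not prove this lemma at all; it is imported verbatim from \cite[Theorem~2.4]{BaKnSk2022} and used as a black box in the proof of Proposition~\ref{cayley_prop_1}. What you have written is a self-contained proof of a result the paper merely quotes, so in effect you have supplied strictly more than the paper does on this point.
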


We will use $G \cart H$ to denote the cartesian product \cite{ImKlaRa2008} of two graphs $G$ and $H$. A graph is \emph{prime} (with respect to the cartesian product) if it is not isomorphic to a graph of the form $G \cart H$, where $G$ and $H$ are both nontrivial. It is known that any connected graph $G$ has a unique factorization of the form $G = G_1 \cart G_2 \cart \cdots \cart G_k$, where $G_1, G_2, \ldots, G_k$ are all prime, up to isomorphisms and the order of the factors \cite[Theorem~15.1]{ImKlaRa2008}. This factorization is called the \emph{prime factorization} of $G$ and we say that two graphs $G$ and $H$ are coprime if they do not have a common factor in their respective prime factorizations. For a comprehensive treatment of various graph products, see \cite{HaImKla2011}.

\begin{lemma}[\hspace{1sp}{\cite[Section 1.4.6]{BrouHae2012}}]
\label{cart_lemma}
    For any two graphs $G$ and $H$, the spectrum of the cartesian product $G \cart H$ is given by
    \[
        \sigma(G \cart H) = \{ \lambda + \mu \colon \lambda \in \sigma(G), \, \mu \in \sigma(H) \} .
    \]
    Moreover, if $u \in \mathbb{R}^{V(G)}$ is an eigenvector of $G$ for the eigenvalue $\lambda$ and $v \in \mathbb{R}^{V(H)}$ is an eigenvector of $H$ for the eigenvalue $\mu$, then the vector $w \in \mathbb{R}^{V(G \cart H)}$ defined as
    \[
        w_{(g, h)} = u_g \, v_h \qquad (g \in V(G),\ h \in V(H))
    \]
    is an eigenvector of $G \cart H$ for the eigenvalue $\lambda + \mu$.
\end{lemma}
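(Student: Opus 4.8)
The plan is to realize the adjacency matrix of the cartesian product as a sum of two Kronecker products and then invoke the spectral theorem. Write $m = |V(G)|$ and $n = |V(H)|$, and identify $V(G \cart H)$ with $V(G) \times V(H)$ ordered lexicographically. The first step is to check straight from the definition of the cartesian product that
\[
    A(G \cart H) = A(G) \otimes I_n + I_m \otimes A(H),
\]
because $(g, h)$ and $(g', h')$ are adjacent in $G \cart H$ precisely when either $g = g'$ and $h \sim h'$ in $H$, or $h = h'$ and $g \sim g'$ in $G$, and these two disjoint cases contribute exactly the two summands on the right.

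The second step establishes the ``moreover'' part. Given $A(G) u = \lambda u$ and $A(H) v = \mu v$, set $w = u \otimes v$, i.e.\ $w_{(g,h)} = u_g v_h$. Using the mixed-product property of the Kronecker product,
\[
    \bigl(A(G) \otimes I_n + I_m \otimes A(H)\bigr)(u \otimes v) = (A(G)u) \otimes v + u \otimes (A(H)v) = (\lambda + \mu)(u \otimes v),
\]
so $w$ is an eigenvector of $A(G \cart H)$ for the eigenvalue $\lambda + \mu$; it is nonzero since $u, v \neq 0$.

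The third step upgrades this to the full multiset identity. Since $A(G)$ and $A(H)$ are real symmetric, fix orthonormal eigenbases $u_1, \ldots, u_m$ of $\mathbb{R}^{V(G)}$ and $v_1, \ldots, v_n$ of $\mathbb{R}^{V(H)}$ with $A(G)u_i = \lambda_i u_i$ and $A(H)v_j = \mu_j v_j$. The $mn$ vectors $u_i \otimes v_j$ are then pairwise orthonormal in $\mathbb{R}^{V(G \cart H)}$, hence linearly independent, and by the second step each is an eigenvector for $\lambda_i + \mu_j$. As their number equals $\dim \mathbb{R}^{V(G \cart H)} = mn$, they form a complete orthonormal eigenbasis, so the multiset of eigenvalues of $A(G \cart H)$ is exactly $\{\lambda_i + \mu_j : 1 \le i \le m,\ 1 \le j \le n\}$, which is the claimed multiset.

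The argument is essentially routine; the only point that needs a little care is the bookkeeping of multiplicities in the last step. One must pass through an explicit orthonormal eigenbasis of the product rather than merely matching the underlying sets of eigenvalues, so that repeated eigenvalues of $G$ or $H$, as well as accidental coincidences among the sums $\lambda_i + \mu_j$, are all accounted for with the correct multiplicity.
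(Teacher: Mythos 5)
Your proof is correct. The paper does not prove this lemma itself --- it is quoted verbatim from Brouwer and Haemers --- and your argument (writing $A(G \cart H) = A(G) \otimes I_n + I_m \otimes A(H)$, applying the mixed-product rule to $u \otimes v$, and then tensoring orthonormal eigenbases to get the multiset identity with correct multiplicities) is precisely the standard proof found in that reference, with the multiplicity bookkeeping handled properly.
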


We denote the dihedral group of order $2n$ by $\Dih(n)$ and the symmetric group over $\mathbb{Z}_n$ by $\Sym(n)$. For any permutation group $\Gamma$ acting on $V$ and any point $x \in V$, the stabilizer of $x$ in $\Gamma$ will be denoted by $\Gamma_x$, and the orbit of $x$ in $\Gamma$ will be denoted by $x^\Gamma$.

\begin{lemma}[\hspace{1sp}{\cite[Lemma~2.2.2]{GodRoy2001}}]\label{orb_stab}
    Let $\Gamma$ be a permutation group acting on $V$ and let $x$ be a point in $V$. Then
    \[
        |\Gamma_x| \, |x^\Gamma| = |\Gamma| .
    \]
\end{lemma}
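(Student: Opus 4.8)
The plan is to establish the standard bijection between the set of left cosets of the stabilizer $\Gamma_x$ and the orbit $x^\Gamma$, and then to invoke Lagrange's theorem. First I would record that $\Gamma_x = \{ g \in \Gamma \colon g \cdot x = x \}$ is indeed a subgroup of $\Gamma$: it contains the identity, and it is closed under composition and inversion, since $g \cdot x = x$ and $h \cdot x = x$ force $(gh) \cdot x = x$ and $g^{-1} \cdot x = x$. Consequently the left cosets $g \Gamma_x$ partition $\Gamma$ into $[\Gamma : \Gamma_x] = |\Gamma| / |\Gamma_x|$ blocks, each of cardinality $|\Gamma_x|$.

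Next I would define the map $\varphi$ from the set of left cosets of $\Gamma_x$ to $x^\Gamma$ by $\varphi(g \Gamma_x) = g \cdot x$. The crux — and the only step demanding any care — is the observation that for $g, h \in \Gamma$ one has $g \Gamma_x = h \Gamma_x$ if and only if $h^{-1} g \in \Gamma_x$, i.e.\ if and only if $(h^{-1} g) \cdot x = x$, which is equivalent to $g \cdot x = h \cdot x$. Reading this chain of equivalences in one direction shows that $\varphi$ is well defined, as the value $g \cdot x$ does not depend on the chosen coset representative; reading it in the other direction shows that $\varphi$ is injective.

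Surjectivity of $\varphi$ is immediate: by definition every element of $x^\Gamma$ has the form $g \cdot x$ for some $g \in \Gamma$, and such an element equals $\varphi(g \Gamma_x)$. Hence $\varphi$ is a bijection, so $|x^\Gamma|$ equals the number of left cosets of $\Gamma_x$, namely $[\Gamma : \Gamma_x]$. Combining this with the identity $|\Gamma| = [\Gamma : \Gamma_x] \, |\Gamma_x|$ furnished by Lagrange's theorem gives $|\Gamma_x| \, |x^\Gamma| = |\Gamma|$, as claimed. I do not anticipate a genuine obstacle: the argument is entirely routine, and the single point worth being attentive to is that the coset-equality criterion is a true biconditional, which is precisely what lets one and the same computation deliver well-definedness and injectivity simultaneously.
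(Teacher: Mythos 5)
Your proof is correct and is the standard orbit--stabilizer argument; the paper itself gives no proof but cites \cite[Lemma~2.2.2]{GodRoy2001}, whose proof is exactly this coset bijection combined with Lagrange's theorem. Nothing further is needed.
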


We use $\Aut G$ to denote the automorphism group of a graph $G$. We proceed with the following result on the automorphism groups of circulant graphs.

\begin{lemma}[\hspace{1sp}{\cite[Lemma~35]{BaFoPi2024}}]\label{dih_prel}
    For every $k \ge 1$ it holds that $\Aut(\Circ(n, \{1, 2, \ldots, k\})) \cong \Dih(n)$ for all $n \ge 2k + 3$.
\end{lemma}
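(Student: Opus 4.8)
The plan is to prove the two inclusions $\Dih(n) \le \Aut G$ and $\Aut G \le \Dih(n)$, where I write $G = \Circ(n, \{1, 2, \ldots, k\})$, i.e., the $k$-th power of the cycle $C_n$ on vertex set $\mathbb{Z}_n$. The first inclusion is routine: the translations $x \mapsto x + a$ ($a \in \mathbb{Z}_n$) and the reflection $x \mapsto -x$ all preserve the adjacency relation ``$u - v \in \{\pm 1, \ldots, \pm k\}$'', and together they generate a subgroup of $\Aut G$ isomorphic to $\Dih(n)$.

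For the reverse inclusion, the idea is that when $n \ge 2k+3$ the spanning cycle $C_n = \Circ(n, \{1\})$ can be recovered from $G$ alone via a local invariant, so that every automorphism of $G$ must preserve it. Concretely, for adjacent vertices $u, v$ at cyclic distance $j$ (so $v - u \in \{j, -j\}$ with $1 \le j \le k$), I would compute $|N_G(u) \cap N_G(v)|$ by regarding $N_G(u) \cup \{u\}$ and $N_G(v) \cup \{v\}$ as arcs of $2k+1$ consecutive elements of $\mathbb{Z}_n$ and intersecting them; a short case analysis — distinguishing whether the two arcs overlap in a single segment or, in the regime $2k+3 \le n \le 3k$, in two segments — yields $|N_G(u) \cap N_G(v)| = 2k - j - 1$ when $j \le n - 2k - 1$ and $|N_G(u) \cap N_G(v)| = 4k - n$ otherwise. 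The point is that for $n \ge 2k+3$ the value $2k-2$ attained at cyclic distance $1$ strictly exceeds the value $\max(2k-3, 4k-n)$ attained at every cyclic distance $j \ge 2$, so the edges of $C_n$ are exactly the edges of $G$ with the maximum number of common neighbours. Hence any $\varphi \in \Aut G$ maps $E(C_n)$ onto itself, so $\varphi$ restricts to an automorphism of $C_n$; since $\Aut C_n \cong \Dih(n)$ for $n \ge 3$, we get $\Aut G \le \Dih(n)$, and with the first inclusion this gives $\Aut G \cong \Dih(n)$. (Alternatively, having shown $\varphi$ preserves $E(C_n)$, one may fix a vertex, note $\varphi$ then either fixes or swaps its two $C_n$-neighbours, propagate along the cycle to conclude the stabilizer of a vertex has order $2$, and finish with vertex-transitivity and orbit--stabilizer, i.e.\ Lemma~\ref{orb_stab}, to get $|\Aut G| = 2n = |\Dih(n)|$.)

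The main obstacle is the common-neighbour computation in the wraparound regime $2k+3 \le n \le 3k$, where the neighbourhood-arcs of two adjacent vertices can meet in two disjoint segments of $\mathbb{Z}_n$; one must check that even there the count for distance-$1$ edges still strictly dominates, so that the boundary case $n = 2k+3$ (for which $C_n^k$ is in fact the complement of $C_n$) is not lost. The rest of the argument is bookkeeping, plus invoking the classical fact $\Aut C_n \cong \Dih(n)$.
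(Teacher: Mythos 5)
Your argument is correct, but note that the paper does not actually prove this statement: it is imported verbatim as \cite[Lemma~35]{BaFoPi2024}, so there is no in-paper proof to compare against. Your self-contained proof is sound. The inclusion $\Dih(n) \le \Aut G$ via translations and negation is immediate, and your common-neighbour count checks out: writing $N_G(u)\cup\{u\}$ and $N_G(v)\cup\{v\}$ as arcs of length $2k+1$ and using inclusion--exclusion, an edge at cyclic distance $j$ has $2k-j-1$ common neighbours when $n \ge 2k+j+1$ and $4k-n$ otherwise (the two formulas agree at the boundary $n = 2k+j+1$, and the wraparound case only occurs for $n \le 3k$, where $4k-n \ge k \ge 0$). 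Since $n \ge 2k+3$ forces $4k - n \le 2k-3 < 2k-2$, the distance-$1$ edges are exactly those maximizing the number of common neighbours, so every automorphism preserves the spanning cycle $\Circ(n,\{1\})$ and hence lies in $\Aut C_n \cong \Dih(n)$; combined with the first inclusion this gives equality as permutation groups. The degenerate case $k=1$ is covered trivially (there are no edges of distance $j \ge 2$), and your observation that $n = 2k+3$ yields the complement of an $n$-cycle is a useful sanity check on the boundary. This is essentially the standard way such statements are established (and is in the spirit of the local-invariant arguments the paper itself uses inside Propositions~\ref{cayley_prop_2} and~\ref{cayley_prop_3} to pin down stabilizers), so I see no gap.
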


We end the section by stating a few well-known facts about the cyclotomic polynomials. For any $n \in \mathbb{N}$, the \emph{cyclotomic polynomial} $\Phi_n(x)$ is defined as
\[
    \Phi_n(x) = \prod_{\xi}(x - \xi) ,
\]
where $\xi \in \mathbb{C}$ ranges over the primitive $n$-th roots of unity. It is known that all $\Phi_n(x)$ polynomials have integer coefficients and are irreducible in $\mathbb{Q}[x]$ (see, e.g., \cite[Chapter~33]{Gallian2017}). Therefore, any $\mathbb{Q}[x]$ polynomial contains a primitive $n$-th root of unity among its roots if and only if it is divisible by $\Phi_n(x)$.

\section{Edge and arc orbits of Cayley nut graphs}\label{sc_cayley}

In the present section, we prove Theorem \ref{cayley_th} through the following three propositions.

\begin{proposition}\label{cayley_prop_1}
For any even $k \ge 2$ and prime $p \ge k + 2$, the graph $\Circ(2p, \{1, 2, \ldots, k \})$ is a Cayley nut graph with $k$ edge orbits and $k$ arc orbits.
\end{proposition}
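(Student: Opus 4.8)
The plan is to verify three things about $G = \Circ(2p, \{1, 2, \ldots, k\})$ for even $k \ge 2$ and prime $p \ge k+2$: that it is a Cayley graph, that it is a nut graph, and that it has exactly $k$ edge orbits and $k$ arc orbits. The first point is immediate since every circulant graph on $\mathbb{Z}_n$ is a Cayley graph on the cyclic group $\mathbb{Z}_n$ with connection set $S \cup (-S)$. For the nut property, I would apply Lemma~\ref{circ_prel} with $n = 2p$: the hypotheses require $k$ and $n$ even (true, since $p$ prime and $p \ge k+2 \ge 4$ forces nothing about parity of $p$ — wait, $p$ is an odd prime here since $p \ge k+2 \ge 4$, so $n = 2p$ is even as needed), and $n \ge 2k+2$, which follows from $p \ge k+2 > k+1$. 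Then the nut condition is $\gcd(p, k/2) = \gcd(p, k+1) = 1$. Since $p$ is prime and $p \ge k+2$, we have $p > k+1 > k/2$, so $p$ divides neither $k/2$ nor $k+1$, giving both gcd's equal to $1$. Hence $G$ is a nut graph.

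**Counting the orbits.** For the orbit count, I would first pin down $\Aut(G)$. By Lemma~\ref{dih_prel}, $\Aut(\Circ(n, \{1, \ldots, k\})) \cong \Dih(n)$ provided $n \ge 2k+3$; here $n = 2p \ge 2(k+2) = 2k+4 > 2k+3$, so $\Aut(G) \cong \Dih(2p)$, acting on $\mathbb{Z}_{2p}$ in the natural way (generated by the rotation $x \mapsto x+1$ and the reflection $x \mapsto -x$). Since the rotation subgroup $\mathbb{Z}_{2p}$ is already transitive on vertices, $G$ is vertex-transitive, so $o_v(G) = 1$. For edges: two edges lie in the same orbit under $\Dih(2p)$ essentially when they have the same "length" $\ell \in \{1, 2, \ldots, k\}$, because rotations preserve length and reflections preserve length. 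So there are at most $k$ edge orbits, one per length, and I must check they are genuinely distinct — which holds because an automorphism mapping an edge of length $\ell$ to an edge of length $\ell'$ would have to be an automorphism of the circulant, and the dihedral group cannot change the difference class (each reflection $x \mapsto c - x$ sends the pair $\{u, u+\ell\}$ to $\{c-u, c-u-\ell\}$, still of length $\ell$). Hence $o_e(G) = k$. For arcs (ordered edges): a length-$\ell$ arc $(u, u+\ell)$ can be sent by a rotation to $(u', u'+\ell)$, and by the reflection $x \mapsto -x$ to $(-u, -u-\ell) = (-u, (-u) + (-\ell))$, which as an ordered pair has "signed difference" $-\ell \equiv 2p - \ell$; since $1 \le \ell \le k < p$, we have $2p - \ell \notin \{1, \ldots, k\}$, so the reflection does not identify the two orientations of a length-$\ell$ edge. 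Thus each length $\ell$ contributes exactly two arc orbits? — no: each length $\ell$ gives arcs with signed difference $\ell$ or $-\ell$, and these two are swapped by no element of $\Dih(2p)$, so they are distinct orbits, giving $2k$...

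Hmm, that contradicts the claimed $o_a(G) = k$. Let me reconsider: actually the rotation $x \mapsto x + \ell$ composed with nothing — no. The key subtlety is whether $\ell$ and $-\ell$ arcs are in the same orbit. The arc $(0,\ell)$ under the reflection $r\colon x \mapsto -x$ goes to $(0, -\ell)$. But we could also use the reflection $x \mapsto \ell - x$, which sends $(0, \ell) \mapsto (\ell, 0)$, an arc with signed difference $-\ell$. So $(0,\ell)$ and $(\ell, 0)$ ARE in the same orbit, meaning each undirected edge's two arcs are identified, and $o_a(G) = o_e(G) = k$. I must make sure to present this correctly: the reflection through the midpoint of an edge reverses that edge.

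**Main obstacle.** The genuine content is the orbit computation, and within it the arc count is the delicate part — one must not conflate "signed difference $\ell$" as an invariant, since a suitable reflection reverses any single edge, collapsing its two arcs into one orbit. Once $\Aut(G) \cong \Dih(2p)$ is in hand via Lemma~\ref{dih_prel}, the argument is: rotations act transitively on edges of each fixed length $\ell \in \{1, \ldots, k\}$; no automorphism changes the length of an edge (lengths are $\le k < p = n/2$, so distinct lengths give distinct difference-classes in $\mathbb{Z}_n$ even accounting for the $\pm$ identification); hence $o_e(G) = k$; and the reflection fixing an edge setwise reverses it, so each edge orbit is a single arc orbit, giving $o_a(G) = k$ as well. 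I would also double-check the corner case $k = 2$, $p \ge 4$ (so $p \in \{5, 7, 11, \ldots\}$, smallest example $\Circ(10, \{1,2\})$) to confirm the inequalities in Lemmas~\ref{circ_prel} and~\ref{dih_prel} are met.
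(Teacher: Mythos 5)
Your proposal is correct and follows essentially the same route as the paper: Lemma~\ref{dih_prel} identifies $\Aut(G)$ as $\Dih(2p)$, the difference classes $\ell \in \{1,\dots,k\}$ give the $k$ edge orbits, a reflection through an edge's midpoint reverses it so arc orbits coincide with edge orbits, and Lemma~\ref{circ_prel} gives the nut property. You are in fact more explicit than the paper in checking the hypotheses ($2p \ge 2k+3$, the gcd conditions via primality of $p$), and your resolution of the momentary arc-doubling worry is the right one.
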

\begin{proof}
Let $G = \Circ(n, \{1, 2, \ldots, k \})$. The graph $G$ is obviously a Cayley graph. By Lemma~\ref{dih_prel}, we have $\Aut G \cong \Dih(2p)$, which means that for each $i \in \{1, 2, \ldots, k \}$, the set $\{ \{ v, v + i \} \colon v \in \mathbb{Z}_{2p} \}$ forms an edge orbit of $G$. From here, we obtain $o_e(G) = k$. Moreover, for any two adjacent vertices $v_1, v_2 \in \mathbb{Z}_n$, there is an automorphism of $G$ that swaps $v_1$ and $v_2$. Therefore, we also get $o_a(G) = k$. Finally, Lemma \ref{circ_prel} implies that $G$ is a nut graph.
\end{proof}

\begin{figure}[H]
\centering
\subcaptionbox{$\Circ(10, \{1, 2\})$}{
\begin{tikzpicture}[scale=1.0]
\tikzstyle{vertex}=[draw,circle,font=\scriptsize,minimum size=4pt,inner sep=1pt,fill=black]
\tikzstyle{edge}=[draw,thick]
\tikzstyle{medge}=[draw,thick,color=red]
\foreach \i in  {0,...,9} {
	\node[vertex] (v\i) at ({36*\i}:1.5) {};
}
\foreach \i in  {0,...,9} {
	\pgfmathtruncatemacro{\j}{mod(\i + 1, 10)}
	\pgfmathtruncatemacro{\k}{mod(\i + 2, 10)}
	\path[medge] (v\i) -- (v\k);
	\path[edge] (v\i) -- (v\j);
}
\end{tikzpicture}
}\qquad
\subcaptionbox{$\Circ(14, \{1, 2, 3, 4\})$}{
\begin{tikzpicture}[scale=1.0]
\tikzstyle{vertex}=[draw,circle,font=\scriptsize,minimum size=4pt,inner sep=1pt,fill=black]
\tikzstyle{edge}=[draw,thick]
\tikzstyle{medge}=[draw,thick,color=green!80!black]
\tikzstyle{aedge}=[draw,thick,color=red]
\tikzstyle{bedge}=[draw,thick,color=cyan]
\foreach \i in  {0,...,13} {
	\node[vertex] (v\i) at ({180/7*\i}:1.5) {};
}
\foreach \i in  {0,...,13} {
	\pgfmathtruncatemacro{\j}{mod(\i + 1, 14)}
	\pgfmathtruncatemacro{\k}{mod(\i + 2, 14)}
	\pgfmathtruncatemacro{\l}{mod(\i + 3, 14)}
	\pgfmathtruncatemacro{\m}{mod(\i + 4, 14)}
	\path[medge] (v\i) -- (v\k);
	\path[edge] (v\i) -- (v\j);
	\path[aedge] (v\i) -- (v\l);
	\path[bedge] (v\i) -- (v\m);
}
\end{tikzpicture}
}
\caption{Cayley nut graphs with two and four edge (arc) orbits from Proposition \ref{cayley_prop_1}. The edge (arc) orbits are color-coded.}
\end{figure}
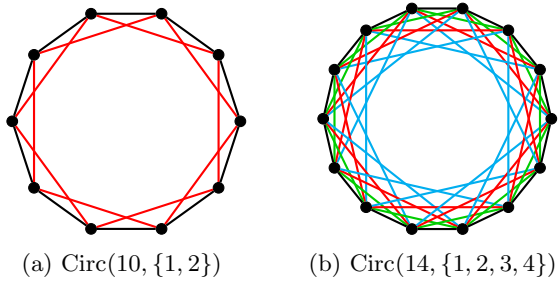

\begin{proposition}\label{cayley_prop_2}
For any odd $k \ge 5$ and prime $p \ge 2k + 1$, the graph $\Circ(2p, \{ 2, 3, \ldots, k - 1, \linebreak p \}) \cart K_2$ is a Cayley nut graph with $k$ edge orbits and $k$ arc orbits.
\end{proposition}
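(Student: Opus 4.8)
Write $G = H \cart K_2$ with $H = \Circ(2p, \{2, 3, \ldots, k-1, p\})$; the plan is to verify the three asserted properties in turn. That $G$ is a Cayley graph is immediate, since $H$ is a circulant (hence Cayley) graph, $K_2$ is the Cayley graph of $\mathbb{Z}_2$ on $\{1\}$, and a cartesian product of Cayley graphs is a Cayley graph — here on $\mathbb{Z}_{2p} \times \mathbb{Z}_2$. To see that $G$ is a nut graph I would use Lemma~\ref{cart_lemma}: since $\sigma(K_2) = \{1, -1\}$, the multiplicity of $0$ in $\sigma(G)$ equals the number of $2p$-th roots of unity $\zeta$ with $P(\zeta) \in \{1, -1\}$, where $P(x) = \sum_{s=2}^{k-1}(x^{s} + x^{2p-s}) + x^{p}$ is the polynomial associated with the circulant matrix $A(H)$ by Lemma~\ref{circ_lemma}. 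I would then sort the $2p$-th roots of unity by their order $d \in \{1, 2, p, 2p\}$. For $d = 1$ one gets $P(1) = 2k - 3 \notin \{1, -1\}$; for $d = 2$, i.e.\ $\zeta = -1$, one gets $P(-1) = 1$, using that $k$ is odd (so $\sum_{s=2}^{k-1}(-1)^{s} = 1$) and $p$ is odd (so $(-1)^{p} = -1$); and for $d \in \{p, 2p\}$ I would expand $P(\zeta)$ in the power basis $1, \zeta, \ldots, \zeta^{p-2}$ of $\mathbb{Q}(\zeta)$, which is legitimate because $\Phi_p$ and $\Phi_{2p}$ both have degree $p-1$. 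Writing $\zeta^{-s} = \pm \zeta^{p-s}$ according to whether $\zeta^{p}$ equals $1$ or $-1$, the bound $p \ge 2k+1$ makes the exponents $2, \ldots, k-1$ and $p-k+1, \ldots, p-2$ pairwise distinct, nonzero, and at most $p-2$, so $P(\zeta)$ is an integer combination of basis vectors with a nonzero coefficient on $\zeta^{2}$; hence $P(\zeta) \notin \mathbb{Q}$, in particular $P(\zeta) \ne \pm 1$. Therefore $\zeta = -1$ is the unique root of unity with $P(\zeta) \in \{1, -1\}$ and $P(-1) = 1$, so $0 \in \sigma(G)$ with multiplicity exactly $1$; and since the $\zeta = -1$ eigenvector of $H$ from Lemma~\ref{circ_lemma} (namely $(1, -1, 1, \ldots, -1)$) and the eigenvector $(1, -1)$ of $K_2$ are both full, Lemma~\ref{cart_lemma} yields a null vector of $G$ without zero entries, so $G$ is a nut graph.

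For the orbit counts I would pin down $\Aut G$. Since $\gcd(2, 3) = 1$ the graph $H$ is connected, and it has $2p$ vertices, so any nontrivial cartesian factorization of $H$ would be $K_2 \cart B$ with $B$ connected on $p$ vertices; then $H$ vertex-transitive forces $B$ vertex-transitive, hence $B \cong \Circ(p, T)$ for some $T$ (a vertex-transitive graph on a prime number of vertices is a circulant), but $K_2 \cart \Circ(p, T)$, viewed as a Cayley graph on $\mathbb{Z}_2 \times \mathbb{Z}_p \cong \mathbb{Z}_{2p}$, has a connection set whose only odd element is $p$, contradicting $3 \in \{2, \ldots, k-1, p\}$. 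So $H$ is prime and $H \not\cong K_2$, whence $\Aut G \cong \Aut H \times \Aut K_2 \cong \Aut H \times \mathbb{Z}_2$ by the structure theorem for automorphism groups of cartesian products \cite{HaImKla2011}.

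Granting the claim $\Aut H \cong \Dih(2p)$, the group $\Aut G \cong \Dih(2p) \times \mathbb{Z}_2$ acts on $V(G) = \mathbb{Z}_{2p} \times \mathbb{Z}_2$ by $(v, i) \mapsto (\pm v + a,\ i + j)$. The edges of $G$ come in $k$ types: the ``horizontal'' edges $\{(v, i), (v+s, i)\}$ for each $s \in \{2, 3, \ldots, k-1, p\}$ — that is $k-1$ types, as these $k-1$ circulant steps are distinct in $\mathbb{Z}_{2p}$ even up to sign — together with the ``vertical'' edges $\{(v, 0), (v, 1)\}$. No element of $\Aut G$ carries one type to another (it sends a step-$s$ edge to a step-$(\pm s)$ edge and a vertical edge to a vertical edge), the translation subgroup is transitive within each type, and every type admits an automorphism reversing one of its edges (a reflection $x \mapsto -x + c$ for the horizontal types, the layer swap for the vertical type). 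Hence $o_e(G) = o_a(G) = k$, which together with the first two paragraphs gives the proposition modulo the claim $\Aut H \cong \Dih(2p)$.

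The crux is precisely that claim. As $\{2, \ldots, k-1, p\}$ is not an interval, Lemma~\ref{dih_prel} does not apply, and one has to rule out extra automorphisms of $\Circ(2p, \{2, \ldots, k-1, p\})$ directly — for instance by checking that $\pm 1$ are the only multipliers fixing the connection set and then invoking the classification of automorphism groups of circulant graphs of order $2p$ (the graph $H$ is not of the exceptional generalized wreath type, because the step $3$ entangles the $\mathbb{Z}_2$- and $\mathbb{Z}_p$-components of $\mathbb{Z}_{2p}$), or by a hands-on analysis of a vertex stabilizer. This step is most delicate for small $k$, notably $k = 5$, where several edge types of $H$ carry equally many triangles, so that the cheapest local invariants do not by themselves separate the orbits.
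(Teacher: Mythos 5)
Your spectral argument for the nut property is correct and is essentially the paper's argument in different clothing: where you expand $P(\zeta)$ in the power basis $1, \zeta, \ldots, \zeta^{p-2}$ of $\mathbb{Q}(\zeta)$ and read off a nonzero coefficient on $\zeta^2$, the paper notes that the relevant integer polynomials have degree $2k-2 < p-1 = \deg \Phi_p = \deg \Phi_{2p}$ and so cannot vanish at a primitive $p$-th or $2p$-th root of unity; both versions hinge on the same inequality $p \ge 2k+1$. The Cayley-graph observation is also fine. The genuine gap is the one you flag yourself: you never prove that $\Aut(\Circ(2p,\{2,\ldots,k-1,p\})) \cong \Dih(2p)$, and the entire orbit count rests on it. Listing candidate strategies (``checking multipliers and invoking the classification of automorphism groups of circulants of order $2p$, or a hands-on analysis of a vertex stabilizer'') is not a proof, and this is exactly where the paper does its real work. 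Moreover, your reduction to that claim via primality of $H$ has its own soft spot: from $H \cong K_2 \cart B$ with $B$ circulant of prime order you conclude that the connection set of $H$ would have $p$ as its only odd element, but an abstract isomorphism of circulant graphs need not preserve connection sets up to multiplier --- that is a CI-type assertion which itself requires justification.

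The paper sidesteps both issues by working directly on $G = \Circ(2p,\{2,\ldots,k-1,p\}) \cart K_2$ rather than on the factor $H$. By Lemma~\ref{orb_stab} and vertex-transitivity, $|\Aut G| = 4p\,|\Gamma_v|$. The local invariant $|N_G(u) \setminus N_G(v)|$ equals $6$ precisely for $u = v \pm (2,0)$ and is at least $7$ for every other neighbour $u$ of $v$, so every $\pi \in \Gamma_v$ fixes or swaps $v-(2,0)$ and $v+(2,0)$; since some automorphism does swap them, $|\Gamma_v| = 2\,|\Gamma_{v,v+(2,0)}|$. The two-point stabilizer is then shown to be trivial by propagating fixed vertices along the step-$2$ cycle within one layer and observing that distinct vertices of the other layer have distinct neighbourhoods there. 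This yields $|\Aut G| = 8p$, which matches the $8p$ visible automorphisms $(v_1,v_2) \mapsto (f(v_1), v_2)$ and $(v_1,v_2) \mapsto (f(v_1), v_2+1)$ with $f$ dihedral, and only then do the $k$ edge orbits and $k$ arc orbits follow. To complete your proposal you would need to supply an argument of comparable substance for $\Aut H$ (and patch the primality step, e.g.\ by noting that in $K_2 \cart B$ the $K_2$-fibre edges lie in no triangle and form a disconnecting perfect matching); as it stands, the crux of the proposition is assumed rather than proved.
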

\begin{proof}
Let $G = \Circ(2p, \{ 2, 3, \ldots, k - 1, p \}) \cart K_2$ and let $\Gamma = \Aut G$. The graph $G$ is obviously a Cayley graph for the group $\mathbb{Z}_{2p} \times \mathbb{Z}_2$. By Lemma \ref{orb_stab}, we have $|\Gamma| = 4p \, |\Gamma_v|$, where $\Gamma_v$ is the stabilizer of some vertex $v$ in $\Gamma$. Now, let $\pi \in \Gamma_v$. Observe that
\[
    | N_G(v - (2, 0)) \setminus N_G(v) | = | N_G(v + (2, 0)) \setminus N_G(v) | = 6,
\]
while $|N_G(u) \setminus N_G(v) | \ge 7$ for any $u \in N_G(v) \setminus \{ v - (2, 0), v + (2, 0) \}$. Therefore, $\pi$ either fixes $v - (2, 0)$ and $v + (2, 0)$ or swaps them. Since there exists an automorphism from $\Gamma_v$ that swaps $v - (2, 0)$ and $v + (2, 0)$, Lemma \ref{orb_stab} implies that $|\Gamma_v| = 2 \, |\Gamma_{v, v + (2, 0)}|$, where $\Gamma_{v, v + (2, 0)}$ is the pointwise stabilizer of $v$ and $v + (2, 0)$ in $\Gamma$.

Now, let $\sigma \in \Gamma_{v, v + (2, 0)}$. Moreover, let
\[
    V_1 = \{ v + t(2, 0) \colon t \in \mathbb{Z} \} \qquad \mbox{and} \qquad V_2 = \{ v + (1, 0) + t(2, 0) \colon t \in \mathbb{Z} \}.
\]
Note that $\sigma$ must fix $v - (2, 0)$. By repeating the same argument from before, we conclude that $\sigma$ fixes all the vertices from $V_1$. Observe that any two distinct vertices from $V_2$ have distinct sets of neighbors among the vertices from $V_1$. Therefore, $\sigma$ also fixes all the vertices from $V_2$, and it is easy to see from here that $\sigma$ must be the identity permutation. Thus, we have $|\Gamma_{v, v + (2, 0)}| = 1$, which implies $|\Gamma| = 8p$.

Observe that $\Gamma$ contains the automorphisms $\varphi$ of the form
\[
    \varphi(v_1, v_2) = (f(v_1), v_2) \qquad \mbox{and} \qquad \varphi(v_1, v_2) = (f(v_1), v_2 + 1),
\]
where $v_1 \in \mathbb{Z}_{2p}$, $v_2 \in \mathbb{Z}_2$ and $f \in \Sym(2p)$ is a dihedral permutation. Since there are exactly $8p$ such automorphisms, this means that these are all the automorphisms of $\Gamma$. Therefore, for each $i \in \{ 2, 3, \ldots, k - 1, p \}$, the set $\{ \{ v, v + (i, 0) \} \colon v \in \mathbb{Z}_{2p} \times \mathbb{Z}_2 \}$ forms an edge orbit of $G$. In addition, the set $\{ \{ v, v + (0, 1) \} \colon v \in \mathbb{Z}_{2p} \times \mathbb{Z}_2 \}$ is another edge orbit of $G$. Thus, we have $o_e(G) = k$ and it also trivially follows that $o_a(G) = k$.

Note that $\sigma(K_2) = \{1, -1\}$, with both corresponding eigenvectors being full. By combining Lemmas \ref{cart_lemma} and \ref{circ_lemma}, we have that $\sigma(G)$ consists of the eigenvalues
\[
    \lambda(\zeta) = \sum_{j = 2}^{k - 1} \left( \zeta^j + \zeta^{-j} \right) + \zeta^p + 1 \qquad \mbox{and} \qquad \mu(\zeta) = \sum_{j = 2}^{k - 1} \left( \zeta^j + \zeta^{-j} \right) + \zeta^p - 1,
\]
as $\zeta$ ranges over the $2p$-th roots of unity. We trivially observe that
\[
    \lambda(1) = 2k - 2, \qquad \mu(1) = 2k - 4, \qquad \lambda(-1) = 2 \qquad \mbox{and} \qquad \mu(-1) = 0.
\]
Thus, to complete the proof, it suffices to show that $\lambda(\zeta) \neq 0$ and $\mu(\zeta) \neq 0$ for every $2p$-th root of unity $\zeta \neq 1, -1$.

By way of contradiction, suppose that $\lambda(\zeta) = 0$ or $\mu(\zeta) = 0$ for some $2p$-th root of unity $\zeta \neq 1, - 1$. Since $\zeta^p \in \{1, -1\}$, we have that at least one of the expressions
\[
    \sum_{j = 2}^{k - 1} \left( \zeta^j + \zeta^{-j} \right) + 2, \qquad \sum_{j = 2}^{k - 1} \left( \zeta^j + \zeta^{-j} \right) \qquad \mbox{and} \qquad \sum_{j = 2}^{k - 1} \left( \zeta^j + \zeta^{-j} \right) - 2
\]
equals zero. In other words, $\zeta$ is a root of at least one of the nonzero $\mathbb{Z}[x]$-polynomials
\[
    \sum_{j = 0}^{k - 3} x^j + \sum_{j = k + 1}^{2k - 2} x^j + 2x^{k - 1}, \qquad \sum_{j = 0}^{k - 3} x^j + \sum_{j = k + 1}^{2k - 2} x^j \qquad \mbox{and} \qquad \sum_{j = 0}^{k - 3} x^j + \sum_{j = k + 1}^{2k - 2} x^j - 2x^{k - 1},
\]
whose degrees are all $2k - 2$. This means that at least one of these three polynomials is divisible by $\Phi_p(x)$ or $\Phi_{2p}(x)$ because the order of $\zeta$ is either $p$ or $2p$. The contradiction now follows from $\deg \Phi_p = \deg \Phi_{2p} = p - 1 > 2k - 2$.
\end{proof}

\begin{proposition}\label{cayley_prop_3}
    For any odd $n \ge 5$, the graph $\Circ(2n, \{ 1, n \}) \cart K_4$ is a Cayley nut graph with three edge orbits and three arc orbits.
\end{proposition}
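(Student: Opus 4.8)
The plan is to follow the template of Propositions~\ref{cayley_prop_1} and~\ref{cayley_prop_2}: first note that $G := \Circ(2n, \{1, n\}) \cart K_4$ is a Cayley graph, then establish the nut property from the spectrum, and finally determine the edge and arc orbits. For the Cayley property it suffices to recall that $\Circ(2n, \{1, n\})$ (the Möbius ladder on $2n$ vertices) is a Cayley graph on $\mathbb{Z}_{2n}$ with connection set $\{1, -1, n\}$, that $K_4$ is a Cayley graph on $\mathbb{Z}_4$ with connection set $\{1, 2, 3\}$, and that the cartesian product of two Cayley graphs is a Cayley graph on the direct product of the underlying groups; thus $G$ is a Cayley graph for $\mathbb{Z}_{2n} \times \mathbb{Z}_4$.

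For the nut property I would invoke Lemmas~\ref{circ_lemma} and~\ref{cart_lemma}. First, $A(K_4) = J - I$ gives $\sigma(K_4) = \{3, -1, -1, -1\}$, with the eigenvalue $3$ afforded by the full eigenvector $(1, 1, 1, 1)^\intercal$. Second, Lemma~\ref{circ_lemma} shows that the eigenvalues of $\Circ(2n, \{1, n\})$ are $2\cos(j\pi/n) + (-1)^j$ for $j = 0, 1, \ldots, 2n - 1$. A short computation, tracking the parity of $j$, shows that the value $-3$ is attained exactly once, namely at $j = n$ (where oddness of $n$ enters, via $(-1)^n = -1$), the corresponding eigenvector $(1, -1, 1, \ldots, -1)^\intercal$ being full, and that the value $1$ is \emph{never} attained — again using that $n$ is odd, since for $n$ even the indices $j = n/2$ and $j = 3n/2$ would produce the eigenvalue $1$. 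By Lemma~\ref{cart_lemma}, $0 \in \sigma(G)$ can arise only as $(-3) + 3$, hence with multiplicity $1 \cdot 1 = 1$, and the associated eigenvector $w_{(g,h)} = u_g v_h$ — a product of the two full eigenvectors above — is full. Therefore $G$ is a nut graph.

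For the orbit count I would prove $o_e(G) \ge 3$ and $o_e(G) \le 3$, and likewise for $o_a$. The graph $G$ has three visibly different kinds of edges: those coming from the step $1$ of $\Circ(2n, \{1, n\})$, those coming from the step $n$, and the $K_4$-edges. To see that no automorphism can merge these kinds, I would compute for an edge $e = \{x, y\}$ the pair consisting of the number of $4$-cycles of $G$ through $e$ and the number of common neighbours of $x$ and $y$ in $G$. Using that every $4$-cycle of a cartesian product lies in a single layer or is a ``Cartesian square'' built from one edge of each factor, together with the facts that $\Circ(2n, \{1, n\})$ is triangle-free and that a step-$1$ (resp.\ step-$n$) edge of $\Circ(2n, \{1, n\})$ lies in exactly one (resp.\ two) $4$-cycle of $\Circ(2n, \{1, n\})$ when $n \ge 5$ is odd, these pairs come out as $(4, 0)$, $(5, 0)$ and $(5, 2)$ for the three kinds of edges, respectively; as they are pairwise distinct, the three kinds lie in distinct orbits, so $o_e(G) \ge 3$ and hence $o_a(G) \ge 3$. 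For the reverse inequality I would exhibit the subgroup $\Dih(2n) \times \Sym(4) \le \Aut G$ acting coordinatewise, where $\Dih(2n)$ acts on $\Circ(2n, \{1, n\})$ as the group generated by $v \mapsto v + 1$ and $v \mapsto -v$, and $\Sym(4) = \Aut K_4$. Since the rotations act transitively on the step-$1$ edges and on the step-$n$ edges, $\Sym(4)$ is edge-transitive on $K_4$, and both factors are vertex-transitive, this subgroup already has exactly three edge orbits, giving $o_e(G) = 3$; moreover each of these orbits forms a single arc orbit, since a step-$1$ edge $\{0, 1\}$ is reversed by $v \mapsto -v$ followed by a rotation, a step-$n$ edge $\{0, n\}$ is reversed by $v \mapsto v + n$, and any $K_4$-edge is reversed by a transposition in $\Sym(4)$, so $o_a(G) = 3$ as well. (Alternatively, one can observe that $\Circ(2n, \{1, n\})$ and $K_4$ are coprime prime graphs for $n \ge 5$ odd — $\Circ(2n, \{1, n\})$ is prime because it is bipartite whereas its only candidate nontrivial factorization $C_n \cart K_2$ is not — and deduce $\Aut G \cong \Aut(\Circ(2n, \{1, n\})) \times \Aut(K_4)$ from the structure theory of cartesian products.)

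Apart from the essentially routine spectral part, I expect the main obstacle to be the $4$-cycle and common-neighbour bookkeeping behind the lower bound: one must correctly enumerate the $4$-cycles through step-$1$ and step-$n$ edges inside $\Circ(2n, \{1, n\})$ — which is exactly where the hypothesis $n \ge 5$ is needed, since the case $n = 3$ (where $\Circ(6,\{1,3\}) \cong K_{3,3}$ is edge-transitive) behaves differently — and then carefully transfer these counts to $G$ for each of the three edge types, verifying that the resulting invariant pairs are genuinely pairwise distinct.
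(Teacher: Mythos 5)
Your proposal is correct, and the Cayley and spectral parts coincide in substance with the paper's (the paper writes the circulant eigenvalues as $\zeta + \zeta^{-1} + \zeta^n$ with $\zeta^n \in \{1,-1\}$ and rules out $\zeta+\zeta^{-1} \in \{-4,-2,0,2\}$ for $\zeta \neq \pm 1$, which is exactly your cosine computation; both arguments then read off nullity one and a full kernel vector from Lemmas~\ref{circ_lemma} and~\ref{cart_lemma}). Where you genuinely diverge is the orbit count. The paper determines $\Aut G$ completely: it invokes the structure theorem for automorphisms of Cartesian products of coprime connected graphs (essentially your parenthetical remark, which is also correct --- $\Circ(2n,\{1,n\})$ is prime for odd $n \ge 5$ since it is bipartite while the only candidate factorization $C_n \cart K_2$ is not) to get $\Aut G \cong \Aut(\Circ(2n,\{1,n\})) \times \Sym(4)$, and then pins down $\Aut(\Circ(2n,\{1,n\})) \cong \Dih(2n)$ by a stabilizer argument based on common neighbours, after which the three orbits are read off directly. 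You instead sandwich the count: a lower bound $o_e(G) \ge 3$ from the automorphism-invariant pairs $(4,0)$, $(5,0)$, $(5,2)$ of ($4$-cycles through an edge, common neighbours of its endpoints) --- which I have checked and which are right, using that a step-$1$ (resp.\ step-$n$) edge of $\Circ(2n,\{1,n\})$ lies on one (resp.\ two) $4$-cycles and that each edge of $G$ lies on three additional Cartesian squares, plus two layer $4$-cycles for a $K_4$-edge --- and an upper bound from the visible subgroup $\Dih(2n) \times \Sym(4)$, whose three edge orbits each contain a reversing automorphism, giving $o_e(G) = o_a(G) = 3$. Your route avoids both the prime-factorization machinery and the full determination of $\Aut G'$ at the price of the $4$-cycle bookkeeping; the paper's route yields the stronger byproduct $\Aut G \cong \Dih(2n) \times \Sym(4)$, which your sandwich does not.
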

\begin{proof}
    Let $G' = \Circ(2n, \{1, n\})$ and $\Gamma' = \Aut G'$, as well as $G = G' \cart K_4$ and $\Gamma = \Aut G$. The graph $G$ is obviously a Cayley graph for the group $\mathbb{Z}_{2n} \times \mathbb{Z}_4$. Observe that $K_4$ is prime with respect to the cartesian product, while $G'$ and $K_4$ are coprime. For this reason, $\Gamma$ comprises the automorphisms $\varphi$ of the form
    \[
        \varphi(v_1, v_2) = (f_1(v_1), f_2(v_2)),
    \]
    where $v_1 \in \mathbb{Z}_{2n}$, $v_2 \in \mathbb{Z}_4$ and $f_1 \in \Gamma'$, while $f_2 \in \Sym(4)$ is any permutation (see, e.g., \cite[Theorem~15.5]{ImKlaRa2008}).

    Since $G'$ is vertex-transitive, Lemma \ref{orb_stab} gives $|\Gamma'| = 2n \, |\Gamma'_v|$, where $\Gamma'_v$ is the stabilizer of some vertex $v$ in $\Gamma'$. Now, let $\pi \in \Gamma'_v$. Observe that among the three neighbors $v - (1, 0)$, $v + (1, 0)$ and $v + (n, 0)$ of $v$, only $v - (1, 0)$ and $v + (1, 0)$ do not have an additional common neighbor apart from $v$. Indeed, $v - (1, 0)$ and $v + (n, 0)$ are both adjacent to $v + (n - 1, 0)$, while $v + (1, 0)$ and $v + (n, 0)$ are both adjacent to $v + (n + 1, 0)$. Therefore, $\pi$ either fixes $v - (1, 0)$ and $v + (1, 0)$ or swaps them. Since there exists an automorphism from $\Gamma'_v$ that swaps $v - (1, 0)$ and $v + (1, 0)$, Lemma \ref{orb_stab} implies that $|\Gamma'_v| = 2 \, |\Gamma'_{v, v + (1, 0)}|$, where $\Gamma'_{v, v + (1, 0)}$ is the pointwise stabilizer of $v$ and $v + (1, 0)$ in $\Gamma'$. By repeating the same argument, we trivially observe that $\Gamma'_{v, v + (1, 0)}$ only contains the identity permutation, hence $|\Gamma'| = 4n$ and $\Gamma' \cong \Dih(2n)$.
    
    Having characterized the group $\Gamma$, it follows that graph $G$ has exactly three edge orbits:\ $\{ \{v, v + (1, 0)\} \colon v \in \mathbb{Z}_{2n} \times \mathbb{Z}_4 \}$ and $\{ \{v, v + (n, 0)\} \colon v \in \mathbb{Z}_{2n} \times \mathbb{Z}_4 \}$, alongside $\{ \{v, v + (0, i)\} \colon v \in \mathbb{Z}_{2n} \times \mathbb{Z}_4, i \in \{1, 2, 3 \} \}$. Thus, we obtain $o_e(G) = 3$. Moreover, it is not difficult to see for any two adjacent vertices $v_1, v_2 \in V(G)$, there is an automorphism of $G$ that swaps $v_1$ and $v_2$. From here, we also get $o_a(G) = 3$.

    Note that $\sigma(K_4) = \{ 3, -1, -1, -1 \}$, with the eigenvector corresponding to $3$ being full. By combining Lemmas \ref{cart_lemma} and \ref{circ_lemma}, we conclude that $\sigma(G)$ consists of the eigenvalues
    \[
        \lambda(\zeta) = \zeta + \zeta^{-1} + \zeta^n + 3 \qquad \mbox{and} \qquad \mu_i(\zeta) = \zeta + \zeta^{-1} + \zeta^n - 1,
    \]
    as $\zeta$ ranges over the $2n$-th roots of unity and $i$ ranges over $\{1, 2, 3\}$. We trivially observe that
    \[
        \lambda(1) = 6, \qquad \mu_i(1) = 2, \qquad \lambda(-1) = 0 \qquad \mbox{and} \qquad \mu_i(-1) = -4.
    \]
    Thus, to complete the proof, it is enough to show that $\lambda(\zeta) \neq 0$ and $\mu_i(\zeta) \neq 0$ for every $2n$-th root of unity $\zeta \neq 1, - 1$.

    By way of contradiction, suppose that $\lambda(\zeta) = 0$ or $\mu_i(\zeta) = 0$ for some $2n$-th root of unity $\zeta \neq 1, - 1$. Since $\zeta^n \in \{1, -1\}$, we get $\zeta + \zeta^{-1} \in \{ -4, -2, 0, 2 \}$. The equality $\zeta + \zeta^{-1} = -4$ cannot hold because $|\zeta + \zeta^{-1}| \le |\zeta| + |\zeta^{-1}| = 2$. Moreover, $\zeta + \zeta^{-1} = -2$ and $\zeta + \zeta^{-1} = 2$ are equivalent to $\zeta = -1$ and $\zeta = 1$, respectively, hence we obtain a contradiction in both of these cases. Finally, $\zeta + \zeta^{-1} = 0$ cannot be satisfied because $n$ is assumed to be odd.
\end{proof}

\begin{figure}[H]
\centering
\begin{tikzpicture}[scale=1.0]
\tikzstyle{vertex}=[draw,circle,font=\scriptsize,minimum size=4pt,inner sep=1pt,fill=black]
\tikzstyle{edge}=[draw,thick]
\tikzstyle{medge}=[draw,thick,color=green!80!black]
\tikzstyle{aedge}=[draw,thick,color=red]
\tikzstyle{bedge}=[draw,thick,color=cyan]
\foreach \i in  {0,...,9} {
	\node[vertex] (a\i) at ({36*\i}:2) {};
	\node[vertex] (b\i) at ($ ({36*\i + 9}:2.8) $) {};
	\node[vertex] (c\i) at ($ ({36*\i + 18}:1.6) $) {};
	\node[vertex] (d\i) at ($ ({36*\i + 27}:2.4) $) {};
}
\foreach \i in  {0,...,4} {
    \pgfmathtruncatemacro{\k}{mod(\i + 5, 10)}
    \path[medge] (a\i) -- (a\k);
	\path[medge] (b\i) -- (b\k);
	\path[medge] (c\i) -- (c\k);
	\path[medge] (d\i) -- (d\k);
}
\foreach \i in  {0,...,9} {
	\pgfmathtruncatemacro{\j}{mod(\i + 1, 10)}
	\path[aedge] (a\i) -- (b\i);
	\path[aedge] (a\i) -- (c\i);
	\path[aedge] (a\i) -- (d\i);
	\path[aedge] (b\i) -- (c\i);
	\path[aedge] (b\i) -- (d\i);
	\path[aedge] (c\i) -- (d\i);
}
\foreach \i in  {0,...,9} {
     \pgfmathtruncatemacro{\j}{mod(\i + 1, 10)}
	\path[edge] (a\i) -- (a\j);
	\path[edge] (b\i) -- (b\j);
	\path[edge] (c\i) -- (c\j);
	\path[edge] (d\i) -- (d\j);
}
\end{tikzpicture}
\caption{The Cayley nut graph $\Circ(10, \{ 1, 5 \}) \cart K_4$ with three edge (arc) orbits from Proposition \ref{cayley_prop_3}. The edge (arc) orbits are color-coded.}
\end{figure}
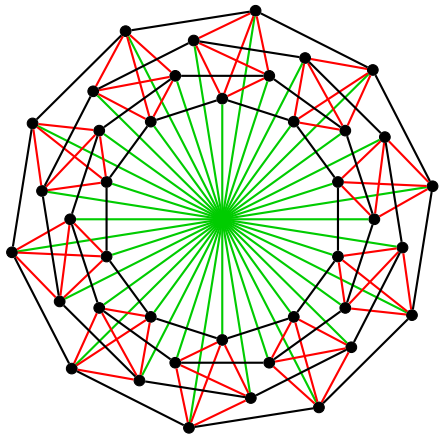

Theorem \ref{cayley_th} now follows from Propositions \ref{cayley_prop_1}--\ref{cayley_prop_3}.

\section{Vertex and edge orbits of nut graphs}\label{sc_main}

In this section, we start from Theorem \ref{cayley_th} and derive Theorem \ref{main_th}. We achieve this through the next lemma.

\begin{lemma}\label{construction_lemma}
    Let $G$ be a vertex-transitive nut graph such that $o_e(G) = o_a(G) = k$. Also, let $\mathcal{E}$ be an edge orbit of $G$ and let $G_1$ be the graph that arises from $G$ by subdividing each edge from $\mathcal{E}$ exactly $4t$ times, where $t \in \mathbb{N}$. Then $G_1$ is a nut graph such that $o_v(G_1) = 2t + 1$, $o_e(G_1) = 2t + k$ and $o_a(G_1) = 4t + k$.
\end{lemma}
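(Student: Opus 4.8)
The plan is to establish three things in order: that $G_1$ is a nut graph, that $\Aut G_1$ is naturally isomorphic to $\Aut G$, and then to read off the three orbit counts from that isomorphism. For the first point, note that subdividing one edge $4t$ times can be performed as $t$ successive applications of ``subdivide an edge four times'' (subdivide $e$ four times, then subdivide one of the four new path-edges four times, and so on); carrying this out for each of the finitely many edges of $\mathcal{E}$ in turn — the as-yet-untouched edges of $\mathcal{E}$ survive at every stage — and invoking Lemma~\ref{subdiv_lemma} after each four-fold subdivision shows $G_1$ is a nut graph.

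Next I would characterize $\Aut G_1$. Since $G$ is vertex-transitive it is $d$-regular, and since no nut graph is $K_2$ or a disjoint union of cycles, $d \ge 3$; hence $V(G)$ is exactly the set of vertices of $G_1$ of degree $\ne 2$, so every $\phi \in \Aut G_1$ maps $V(G)$ to $V(G)$. For $e = \{u,v\} \in \mathcal{E}$ let $P_e$ be the subdividing $u$--$v$ path with $4t$ internal (degree-$2$) vertices. A degree-$2$ vertex of $G_1$ adjacent to a vertex of $V(G)$ must be an ``end'' internal vertex of a unique $P_{e'}$, so starting from $\phi(u) \in V(G)$ and walking along $P_e$ forces $\phi$ to carry the internal vertices of $P_e$, in order, onto those of $P_{e'}$, where $e' = \{\phi(u),\phi(v)\}$; together with the fact that edges of $G$ outside $\mathcal{E}$ appear verbatim in $G_1$, this shows $\bar\phi := \phi|_{V(G)} \in \Aut G$. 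The homomorphism $\phi \mapsto \bar\phi$ is injective (an automorphism fixing $V(G)$ pointwise fixes the endpoints of each $P_e$, hence — a path having no nontrivial endpoint-fixing automorphism — all of $P_e$) and surjective (any $\psi \in \Aut G$ satisfies $\psi(\mathcal{E}) = \mathcal{E}$, so it extends to $G_1$ by sending each $P_e$ isomorphically onto $P_{\psi(e)}$ compatibly with $\psi$ on endpoints). Thus $\Aut G_1 \cong \Aut G$, with $\phi$ permuting the paths $P_e$ as $\bar\phi$ permutes $\mathcal{E}$ and reversing $P_e$ exactly when $\bar\phi$ swaps the endpoints of $e$.

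The orbit counts then follow from three inputs: $G$ vertex-transitive makes $V(G)$ a single $\Aut G_1$-orbit; $\mathcal{E}$ being an edge orbit makes $\Aut G_1$ transitive on the paths $P_e$; and $o_a(G) = o_e(G) = k$ forces every edge of $G$ to be reversed by some automorphism (no edge orbit splits into two arc orbits), so each $P_e$ can be flipped. Hence two internal path-vertices are equivalent iff their positions form a pair $\{i, 4t+1-i\}$ with $i \in \{1,\dots,4t\}$; there are $2t$ such pairs and, since $4t+1$ is odd, no fixed vertex, so $o_v(G_1) = 1 + 2t$. The $4t+1$ edges of a path, at positions $0,\dots,4t$, are equivalent iff their positions form a pair $\{j, 4t-j\}$: that is $2t$ genuine pairs plus the fixed ``middle'' position $2t$, giving $2t+1$ path-edge orbits, and adjoining the $k-1$ edge orbits of $E(G)\setminus\mathcal{E}$ (unchanged in passing to $\Aut G_1$) yields $o_e(G_1) = (2t+1) + (k-1) = 2t+k$. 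For arcs, each of those $k-1$ edge orbits of $E(G)\setminus\mathcal{E}$ is one arc orbit, while along the paths the forward arc at position $j$ is matched only with the backward arc at position $4t-j$, so each of the $2t$ pairs contributes $2$ arc orbits and the middle edge contributes $1$, i.e.\ $4t+1$ path-arc orbits; hence $o_a(G_1) = (4t+1) + (k-1) = 4t+k$.

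The main obstacle is the automorphism step: proving that no automorphism of $G_1$ can route a length-$(4t+1)$ chain of degree-$2$ vertices to anything other than one of the paths $P_e$, and cannot map a path endpoint to a path interior vertex, so that $\Aut G_1$ really is just $\Aut G$ permuting and flipping the $P_e$. Once this is in hand the rest is elementary bookkeeping, whose only delicacy is the parity asymmetry of a path of odd length $4t+1$: its internal vertices pair off with no fixed vertex, but its edges pair off with exactly one fixed (middle) edge — which is precisely what makes the edge count $2t+k$ and the arc count $4t+k$ differ by $2t$.
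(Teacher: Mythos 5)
Your proposal is correct and follows essentially the same route as the paper: establish nuthood by iterating Lemma~\ref{subdiv_lemma}, use the degree dichotomy (original vertices have degree $\ge 3$, subdivision vertices degree $2$) to show every automorphism of $G_1$ restricts to an automorphism of $G$ and that $\Aut G_1 \cong \Aut G$, then count orbits using vertex-transitivity, transitivity on $\mathcal{E}$, and the flippability of each subdivided path coming from $o_e(G)=o_a(G)$. Your final bookkeeping with the pairs $\{i,4t+1-i\}$ and $\{j,4t-j\}$ is simply a more explicit version of what the paper dismisses as trivially observed.
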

\begin{proof}
    By repeated use of Lemma \ref{subdiv_lemma}, we conclude that $G_1$ is a nut graph. Let $V_1 = V(G)$ and $V_2 = V(G_1) \setminus V(G)$. The degree of $G$ is at least three, since $d$-regular nut graphs do not exist for $d < 3$ (see \cite{GaPiSc2023}). This means that the vertices from $V_1$ have a degree of at least three in $G_1$, while the vertices from $V_2$ all have the degree two in $G_1$. Therefore, for any $\pi \in \Aut(G_1)$, the restriction $\pi \restriction V_1$ is a permutation of $V_1$. For any two vertices $u, v \in V_1$ adjacent in $G$ and such that $uv \not\in \mathcal{E}$, we have $u \sim v$ in $G_1$, hence $\pi(u) \sim \pi(v)$ in $G_1$, which means that $\pi(u) \sim \pi(v)$ also holds in $G$. On the other hand, for any two vertices $u, v \in V_1$ adjacent in $G$ and such that $uv \in \mathcal{E}$, the graph $G_1$ contains a $(u, v)$-path of length $4t + 1$ whose internal vertices all have the degree two. This means that $G_1$ also has a $(\pi(u), \pi(v))$-path of length $4t + 1$ whose internal vertices all have the degree two, thus implying $\pi(u) \sim \pi(v)$ in $G$. With all of this in mind, we obtain that $\pi \restriction V_1 \in \Aut G$ holds for every $\pi \in \Aut G_1$.

    Since $G_1$ arises from $G$ by subdividing all the edges from the same edge orbit an equal number of times, it is not difficult to see that each automorphism of $G$ has a unique extension to $V(G_1)$ that is an automorphism of $G_1$. From here, we get $\Aut G_1 \cong \Aut G$. Also, since $o_e(G) = o_a(G)$, we know that for any edge $uv \in \mathcal{E}$, there exists an automorphism of $G$ that swaps $u$ and $v$. From here, we trivially observe that $o_v(G_1) = 2t + 1$, $o_e(G_1) = 2t + k$ and $o_a(G_1) = 4t + k$.
\end{proof}

We are now in a position to give the proof of Theorem \ref{main_th}.

\begin{proof}[Proof of Theorem~\ref{main_th}]
    If $r$ is even, then the result follows from Theorem \ref{cool_th_2}, while for $r = 1$, the result follows from Theorem \ref{cayley_th}. Now, assume that $r$ is odd and $r \ge 3$, and let $k \ge r + 1$. By Theorem \ref{cayley_th}, there exist infinitely many vertex-transitive nut graphs $G$ with $o_e(G) = o_a(G) = k - r + 1$. By applying the subdivision transformation from Lemma \ref{construction_lemma} with $t = \frac{r - 1}{2}$ to each of these graphs, we obtain infinitely many nut graphs $G_1$ with $o_v(G_1) = r$ and $o_e(G_1) = k$.
\end{proof}

\section{Conclusion}\label{sc_conclusion}

From Theorems \ref{cool_th} and \ref{cayley_th}, we obtain the following corollary.

\begin{corollary}\label{cayley_cor}
    For any $k \in \mathbb{N}_0$, the following holds:
    \begin{enumerate}[label=\textbf{(\alph*)}]
        \item there exists a Cayley nut graph with $k$ edge orbits if and only if $k \ge 2$;
        \item there exists a Cayley nut graph with $k$ arc orbits if and only if $k \ge 2$.
    \end{enumerate}
\end{corollary}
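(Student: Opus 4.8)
The plan is to derive the corollary directly from Theorem~\ref{cool_th} and Theorem~\ref{cayley_th}, so the argument splits into a necessity half and a sufficiency half.

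For the ``only if'' direction of part (a): if $G$ is a Cayley nut graph, then $G$ is in particular a nut graph, hence nontrivial, so it has at least one vertex orbit, i.e.\ $o_v(G) \ge 1$. Theorem~\ref{cool_th} then yields $o_e(G) \ge o_v(G) + 1 \ge 2$, so $k = o_e(G) \ge 2$. For the ``only if'' direction of part (b), I would first record the elementary fact that the map sending an arc $(u,v)$ to the edge $\{u,v\}$ induces a surjection from the set of arc orbits onto the set of edge orbits, whence $o_a(G) \ge o_e(G)$; combined with the previous paragraph this gives $o_a(G) \ge 2$, so again $k \ge 2$.

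For the ``if'' direction, both parts are immediate: given any $k \ge 2$, Theorem~\ref{cayley_th} supplies (infinitely many) Cayley nut graphs $G$ with $o_e(G) = o_a(G) = k$, and a single such $G$ simultaneously witnesses that $k$ edge orbits and $k$ arc orbits are each attainable by a Cayley nut graph.

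There is no genuine obstacle here—the corollary is a bookkeeping consequence of the two cited theorems. The only point that might warrant an explicit sentence rather than a bare assertion is the inequality $o_a(G) \ge o_e(G)$ needed for the necessity half of (b); everything else is a direct application of Theorems~\ref{cool_th} and~\ref{cayley_th}.
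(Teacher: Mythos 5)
Your proposal is correct and follows exactly the route the paper intends: the paper gives no explicit proof, simply asserting that the corollary follows from Theorems~\ref{cool_th} and~\ref{cayley_th}, and your argument (necessity via $o_e \ge o_v + 1 \ge 2$ and $o_a \ge o_e$, sufficiency via Theorem~\ref{cayley_th}) is the natural filling-in of those details. The extra sentence justifying $o_a(G) \ge o_e(G)$ is a reasonable addition but not a departure from the paper's approach.
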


Corollary \ref{cayley_cor} solves the realizability problem for the number of edge orbits and arc orbits, respectively, among the Cayley nut graphs. As shown in Theorem \ref{cayley_th}, infinite realizability also holds in all the given cases. On the other hand, Theorems \ref{cool_th} and \ref{main_th} yield the next corollary.

\begin{corollary}\label{main_cor}
    For any $r \in \mathbb{N}$ and $k \in \mathbb{N}_0$, there exists a nut graph $G$ with $o_v(G) = r$ and $o_e(G) = k$ if and only if $k \ge r + 1$.
\end{corollary}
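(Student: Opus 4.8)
The plan is to observe that Corollary~\ref{main_cor} is an immediate consequence of Theorem~\ref{cool_th} and Theorem~\ref{main_th}: the former supplies the necessary inequality, and the latter supplies a matching family of constructions, so the two together pin down exactly the attainable pairs.

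For the ``only if'' direction I would take an arbitrary nut graph $G$ with $o_v(G) = r$ and $o_e(G) = k$ and apply Theorem~\ref{cool_th} verbatim, which yields $k = o_e(G) \ge o_v(G) + 1 = r + 1$. For the ``if'' direction I would assume $k \ge r + 1$; since $r \in \mathbb{N}$, Theorem~\ref{main_th} applies directly and produces a nut graph (indeed, infinitely many) with $o_v(G) = r$ and $o_e(G) = k$. Chaining the two implications gives the stated equivalence.

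I do not anticipate any real obstacle at this stage, since all of the substantive work has already been carried out in establishing Theorem~\ref{cool_th}, Theorem~\ref{cayley_th}, Lemma~\ref{construction_lemma}, and hence Theorem~\ref{main_th}. The only point warranting a word of care is the bookkeeping of domains: the corollary allows $k \in \mathbb{N}_0$, but the hypothesis $r \ge 1$ forces $k \ge r + 1 \ge 2$, so the degenerate values $k \in \{0, 1\}$ never satisfy $k \ge r + 1$ and require no separate discussion. It is also worth noting in passing that, unlike Theorem~\ref{buset_th_2} for general connected graphs where the bound $r \le k + 1$ is tight from one side, here the inequality goes the other way ($k \ge r + 1$), reflecting the fact that a nut graph always has strictly more edge orbits than vertex orbits.
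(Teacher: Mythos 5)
Your proposal is correct and matches the paper exactly: the corollary is stated there as an immediate consequence of Theorems~\ref{cool_th} and \ref{main_th}, with the necessity coming from the former and the sufficiency from the latter. Your remark about the domain bookkeeping for $k \in \{0,1\}$ is a fine (if unnecessary) extra check.
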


Corollary \ref{main_cor} solves the realizability problem for the pairs $(o_v, o_e)$ among the nut graphs. Theorem \ref{main_th} also implies that infinite realizability holds in all the given cases. The related realizability problem for the pairs $(o_v, o_a)$ still remains open.

\begin{problem}
    Determine all the pairs $(r, k)$ for which there exists a nut graph $G$ with $r$ vertex orbits and $k$ arc orbits.
\end{problem}

By Proposition \ref{cayley_prop_3}, there is a Cayley nut graph of order $n$ with three edge orbits for each $n \in \{40, 56, 72, 88, \ldots \}$. However, a computer search over the vertex-transitive graphs \cite{HoRoy2020, RoyHo2020} reveals three vertex-transitive nut graphs $G$ of order $16$ with $o_e(G) = o_a(G) = 3$, two of which are Cayley. Moreover, by Proposition \ref{cayley_prop_2}, there exists a Cayley nut graph of order $n$ with five edge orbits for $n \in \{ 44, 52, 68, 76, \ldots \}$. More precisely, there is such a Cayley nut graph for any $n = 4p$, where $p \ge 11$ is prime. However, there is a unique Cayley nut graph of order $12$ with $o_e(G) = o_a(G) = 5$. It is the Cayley graph for the group $\mathbb{Z}_6 \times \mathbb{Z}_2$ with the connection set $\{ (i, 0) \colon i \in \{1, 2, 4, 5 \}\} \cup \{ (i, 1) \colon i \in \{ 0, 1, 3, 5 \} \}$; see Figure~\ref{cool_graph}.

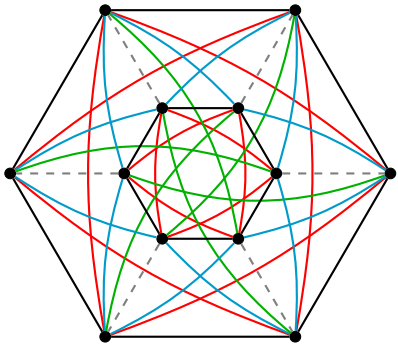
\begin{figure}[H]
\centering
\begin{tikzpicture}[scale=1.0]
\tikzstyle{vertex}=[draw,circle,font=\scriptsize,minimum size=4pt,inner sep=1pt,fill=black]
\tikzstyle{aedge}=[draw,thick,color=red]
\tikzstyle{edge}=[draw,thick]
\tikzstyle{medge}=[draw,thick,color=green!70!black]
\tikzstyle{bedge}=[draw,thick,color=cyan!80!black]
\tikzstyle{cedge}=[draw,thick,color=gray,dashed]
\foreach \i in  {0,...,5} {
	\node[vertex] (a\i) at ({60*\i}:1) {};
	\node[vertex] (b\i) at ($ ({60*\i}:2.5) $) {};
}
\foreach \i in  {0,...,5} {
\pgfmathtruncatemacro{\ka}{mod(\i + 2, 6)}
\pgfmathtruncatemacro{\kb}{mod(\i + 4, 6)}
\path[aedge] (a\i) to[bend right=10] (a\ka);
\path[aedge] (b\i) to[bend left=10] (b\kb);
}
\foreach \i in  {0,...,5} {
\pgfmathtruncatemacro{\j}{mod(\i + 3, 6)}
\pgfmathtruncatemacro{\ka}{mod(\i + 1, 6)}
\pgfmathtruncatemacro{\kb}{mod(\i + 5, 6)}
\path[medge] (b\i) to[bend left=20] (a\j);
\path[bedge] (a\i) to[bend right=10] (b\ka);
\path[bedge] (a\i) to[bend left=10] (b\kb);
}
\foreach \i in  {0,...,5} {
\pgfmathtruncatemacro{\k}{mod(\i + 1, 6)}
\path[cedge] (a\i) -- (b\i);
\path[edge] (a\i) -- (a\k);
\path[edge] (b\i) -- (b\k);
}
\end{tikzpicture}
\caption{The Cayley nut graph for the group $\mathbb{Z}_6 \times \mathbb{Z}_2$ with the connection set $\{ (i, 0) \colon i \in \{1, 2, 4, 5 \}\} \cup \{ (i, 1) \colon i \in \{ 0, 1, 3, 5 \} \}$. The edge (arc) orbits are color-coded.}
\label{cool_graph}
\end{figure}

\noindent
With this in mind, it makes sense to pose the following realizability problem.

\begin{problem}
    For any $k \ge 2$, determine all the orders for which:
    \begin{enumerate}[label=\textbf{(\alph*)}]
        \item there exists a Cayley nut graph with $k$ edge orbits;
        \item there exists a Cayley nut graph with $k$ arc orbits.
    \end{enumerate}
\end{problem}

\noindent
We conclude the paper with another natural problem.

\begin{problem}
    For any $r \in \mathbb{N}$ and $k \ge r + 1$, determine all the orders for which there exists a nut graph with $r$ vertex orbits and $k$ edge orbits.
\end{problem}

\section*{Acknowledgements}

N.\ Bašić is supported in part by the Slovenian Research Agency (research program P1-0294 and research project J5-4596). I.\ Damnjanović is supported by the Ministry of Science, Technological Development and Innovation of the Republic of Serbia, grant number 451-03-137/2025-03/200102, and the Science Fund of the Republic of Serbia, grant \#6767, Lazy walk counts and spectral radius of threshold graphs --- LZWK.

\end{document}